\newtheorem{theorem}{Theorem}[section]
\newtheorem{corollary}{Corollary}[section]
\newtheorem{lemma}{Lemma}[section]
\newtheorem{example}{Example}[section]
\title[Filled Julia and Mandelbrot sets]{Filled Julia and Mandelbrot sets for dynamics over the normed real nonassociative algebras}
\author{Jo\~{a}o Carlos da Motta Ferreira$^{*}$\\ and\\ Maria das Gra\c{c}as Bruno Marietto}
\address{Jo\~{a}o Carlos da Motta Ferreira\\
Center for Mathematics, Computation and Cognition\\
Federal University of ABC\\
Santa Ad\'elia Street 166\\
09210-170, Santo Andr\'e, Brazil}
\email{joao.cmferreira@ufabc.edu.br}
\address{Maria das Gra\c{c}as Bruno Marietto\\
Center for Mathematics, Computation and Cognition\\
Federal University of ABC\\
Santa Ad\'elia Street 166\\
09210-170, Santo Andr\'e, Brazil}
\email{graca.marietto@ufabc.edu.br}
\begin{document}
\thispagestyle{empty}
\maketitle

\begin{abstract} In this paper we introduce the notion of dynamical systems over the class of the normed real nonassociative algebras not necessarily finite-dimensional, generalize the classical filled Julia and Mandelbrot sets over the complex numbers, investigate their orbits and make a topological analysis. Applications of the obtained results are made.
\end{abstract}
\let\thefootnote\relax\footnotetext{$^{*}$ Corresponding author 

{\it 2010 Mathematical Subject Classification.} 17A01, 46H70, 47J99.

{\it Keywords:} Filled Julia sets, Mandelbrot sets, dynamical systems, real nonassociative algebras, normed nonassociative algebras.}

\section{Introduction} 

Filled Julia and Mandelbrot sets arose from the study of dynamical systems over several number systems (see \cite{Blankers,Carleson,Katunin,Yumei}). In this paper we introduce the notion of dynamical systems over the class of the normed real nonassociative algebras not necessarily finite-dimensional, generalize the classical filled Julia and Mandelbrot sets over the complex numbers, investigate their orbits and make a topological analysis of these sets. We apply the obtained results to some classes of finite-dimensional real nonassociative algebras.

Throughout this paper, we assume that all real nonassociative algebras are nonzero and at least $2$-dimensional.

\section{Real nonassociative algebras and normed real nonassociative algebras}

A real linear space $A$ is called a {\it nonassociative algebra} when a multiplication operation $\cdot $ (which is not necessarily associative, commutative or has an identity element) is defined on $A$ satisfying the following properties:
\begin{enumerate}
\item[(i)] $u\cdot (v + w) = u\cdot v + u\cdot w$ and $(u + v)\cdot w = u\cdot w + v\cdot w,$ for all elements $u,v,w$ of $A,$
\item[(ii)] $a(u\cdot v) = (au)\cdot v = u\cdot (av),$ for all real numbers $a$ and elements $u,v$ of $A.$
\end{enumerate}

Let $A$ be a real linear space. A {\it norm} is a mapping $\|\cdot \|$ on $A$ to $\mathds{R}$ satisfying the following: (i) $\|u\|\geq 0,$ for all elements $u$ of $A,$ and $\|u\|=0$ if and only if $u=0,$ (ii) $\|au\|=|a|\|u\|,$ for all real numbers $a$ of $\mathds{R}$ and elements $u$ of $A,$ (iii) $\|u+v\|\leq \|u\|+\|v\|,$ for all elements $u,v$ of $A.$ The pair $(A,\|\cdot \|)$ is called a {\it normed space} if $\|\cdot \|$ is a norm on $A.$

Two norms $\|\cdot \|_{1}$ and $\|\cdot \|_{2}$ on a normed linear space $A$ are called {\it equivalent} if there are positive constants $M_{1}$ and $M_{2}$ such that $\|u\|_{1}\leq M_{2}\|u\|_{2}$ and $\|u\|_{2}\leq M_{1}\|u\|_{1},$ for all elements $u$ of $A.$

A subset $S$ of a normed space $(A,\|\cdot \|)$ is said to be {\it bounded} if there exists a positive real constant $\zeta$ such that $\|u\|\leq \zeta$ for all elements $u$ of $S.$

Let $(A,\|\cdot \|)$ be a normed space. A mapping $s: \mathds{Z}^{+}\rightarrow A$ is called a {\it sequence}, where $\mathds{Z}^{+}$ denotes the set of positive integer numbers. We write $s(n)=s_{n},$ for every integer $n$ of $\mathds{Z}^{+},$ and $s$ is denoted by $(s_{n})_{n\geq 1}.$ A {\it subsequence} of a sequence $(s_{n})_{n\geq 1}$ is a sequence of the form $(t_{l})_{l\geq 1}$ where for each positive integer $l$ there is a positive integer $n_{l}$ such that $\{n_{1}<n_{2}<\cdots <n_{l}<n_{l+1}<\cdots \}$ and $t_{l}=s_{n_{l}}.$ A sequence need not begin with the term of index $1$ but may start with any index. A sequence $s$ whose initial index is a positive integer $n_{0}$ is denoted by $(s_{n})_{n\geq n_{0}}.$ A sequence $(s_{n})_{n\geq 1}$ is said to be {\it convergent} to an element $s_{0}$ of $A$ if for any positive real number $\epsilon $ there exists a positive integer $n_{0}=n_{0}(\epsilon )$ such that $\|s_{n}-s_{0}\|<\epsilon $ when $n\geq n_{0}.$ In this case, we denote this convergence by $\lim _{n\rightarrow +\infty } s_{n}=s_{0}$ and we say that $s_{0}$ is the {\it limit} of $(s_{n})_{n\geq 1}.$

A subset $C$ of a normed space $(A,\|\cdot \|)$ is said to be {\it closed} if the limit of any convergent sequence of points of $C$ belongs to $C.$

An {\it open ball} with center at a point $u$ of $A$ and having radius $\rho >0$ is defined to be the set $\Delta (u,\rho )=\{v\,|\,v \,\textrm{is any element of}\, A\,\textrm{satisfying}\, \|v-u\|<\rho \}.$

Let $(A,\|\cdot \|)$ be a normed space such that $A$ is a real nonassociative algebra. We say that the norm $\|\cdot \|$ is {\it submultiplicative} if the inequality $\|u\cdot v\|\leq \|u\|\|v\|$ holds for all elements $u,v$ of $A.$ A {\it normed real nonassociative algebra} is a normed space $(A,\|\cdot \|)$ such that the norm is submultiplicative. We say that the norm $\|\cdot \|$ is {\it weakly submultiplicative} if there is a positive real number $M_{A}=(A,\|\cdot \|)$ such that $\|u\cdot v\|\leq M_{A}\|u\|\|v\|,$ for all elements $u,v$ of $A.$ In this case, $M_{A}\|\cdot \|$ is a submultiplicative norm on $A$ equivalent to norm $\|\cdot \|$ and so converts $A$ into a normed real nonassociative algebra. We say that the norm $\|\cdot \|$ satisfies the {\it square property} if $\|u^{2}\|=\|u\|^{2},$ for all elements of $A,$ and that satisfies the {\it square inequality} if there is a positive real number $\eta =\eta (A,\|\cdot \|)$ such that $\|u\cdot u\|\geq \eta \|u\|^{2},$ for all elements $u$ of $A.$ It is straightforward that, if a norm $\|\cdot \|$ satisfies the square inequality, then so does every equivalent norm to $\|\cdot \|,$ on $A.$ 

For more details on the theory of nonassociative normed algebras see \cite{Cabrera1,Cabrera2}.

{\it Furthermore, the product of two elements is represented by writing the elements in juxtaposition. Thus $uv$ means the product $u\cdot v,$ of any two elements $u,v$ of $A.$}

\section{Filled Julia and Mandelbrot sets}

In this section, we generalized the notions of the filled Julia and Mandelbrot sets over the class of the normed real nonassociative algebras based on the classical notions of filled Julia and Mandelbrot sets defined over several numbers systems.

Let $S$ be a subset of a normed real nonassociative algebra $(A,\|\cdot \|).$ A {\it dynamical system} is any mapping $f:S\rightarrow S.$ If $f:A\rightarrow A$ is a dynamical system and $u$ is an arbitrary element of $A,$ we write $f^{1}(u)=f(u)$ and $f^{n}(u)=f(f^{n-1}(u))$ for every positive integer $n.$ The set $\{f^{n}(u)\, |\, n\, \textrm{is a positive integer}\}$ is called the {\it orbit} of $u.$ We say that an element $u$ of $A$ has {\it bounded orbit} if there is a positive real number $\zeta $ such that $\|f^{n}(u)\|\leq \zeta ,$ for every positive integer $n.$ When the orbit of an element $u$ is not bounded we say that it is {\it unbounded}.

For any element $c$ of $A$ consider the mapping $f_{c}(u) = u^{2}+c,$ for all elements $u$ of $A.$

The {\it filled Julia set} associated with $f_{c},$ denoted by $\mathcal{K}_{A}(f_{c}),$ is the set of elements $u$ of $A$ for which there exists a positive real number $\zeta _{u}$ such that the inequality
{\allowdisplaybreaks\begin{eqnarray*}\allowdisplaybreaks
\|f^{n}_{c}(u)\|\leq \zeta _{u}
\end{eqnarray*}}
is satisfied, for every positive integer $n.$

The {\it Mandelbrot set}, denoted by $\mathcal{M}_{A},$ is the set of elements $c\in A$ for which there exists a positive real number $\zeta _{c}$ such that the inequality
{\allowdisplaybreaks\begin{eqnarray*}\allowdisplaybreaks
\|f^{n}_{c}(0)\|\leq \zeta _{c}
\end{eqnarray*}}
is satisfied, for every positive integer $n.$

\subsection{Orbit analysis }

\begin{theorem}\label{thm31} Let $A$ be a real nonassociative algebra such that $(A,\|\cdot \|)$ is a normed space whose norm satisfies the square inequality. For every positive real number $\eta $ satisfying $\|u^{2}\|\geq \eta \|u\|^{2},$ for all elements $u$ of $A,$ holds the following: for every element $c$ of $A,$ if for some element $u$ of $A$ and a positive integer $n_{0}$ holds
{\allowdisplaybreaks\begin{eqnarray*}\allowdisplaybreaks
\|f^{n_{0}}_{c}(u)\|> \lambda=\max \{2\eta ^{-1},\|c\|\},
\end{eqnarray*}}
then the orbit of the element $u$ is unbounded.
\end{theorem}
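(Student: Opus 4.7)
The plan is to turn the escape condition at step $n_0$ into geometric growth of the norms of subsequent iterates, by combining the square inequality with the reverse triangle inequality.

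First, I would reindex the orbit after time $n_0$, setting $w_{k}=f^{n_0+k-1}_{c}(u)$ for $k\geq 1$, so that the hypothesis becomes $\|w_{1}\|>\lambda=\max\{2\eta^{-1},\|c\|\}$. The fundamental estimate is
\[
\|w_{k+1}\|=\|w_{k}^{2}+c\|\geq \|w_{k}^{2}\|-\|c\|\geq \eta\|w_{k}\|^{2}-\|c\|,
\]
using the reverse triangle inequality and the square inequality guaranteed by the hypothesis.

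Next, I would extract a contracting factor strictly greater than $1$. Define $\mu=\eta\|w_{1}\|-1$. Since $\|w_{1}\|>2\eta^{-1}$, we have $\eta\|w_{1}\|>2$, hence $\mu>1$. The key inductive claim is that for every $k\geq 1$,
\[
\|w_{k}\|\geq \mu^{\,k-1}\|w_{1}\|.
\]
For the induction step, assuming $\|w_{k}\|\geq \|w_{1}\|>\|c\|$, the basic estimate gives
\[
\|w_{k+1}\|\geq \eta\|w_{k}\|^{2}-\|c\|\geq \eta\|w_{k}\|^{2}-\|w_{k}\|=\|w_{k}\|\bigl(\eta\|w_{k}\|-1\bigr)\geq \mu\|w_{k}\|,
\]
where the last inequality uses $\|w_{k}\|\geq \|w_{1}\|$, so $\eta\|w_{k}\|-1\geq \mu$. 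In particular $\|w_{k+1}\|\geq \|w_{1}\|$, which preserves the induction hypothesis that $\|w_{k+1}\|>\|c\|$, and multiplying by $\mu$ at each step gives the geometric lower bound.

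Finally, since $\mu>1$, the sequence $\mu^{k-1}\|w_{1}\|$ tends to $+\infty$, so $\|f_{c}^{n_{0}+k-1}(u)\|\to +\infty$. Therefore no positive constant $\zeta$ can bound $\|f_{c}^{n}(u)\|$ for all $n$, meaning the orbit of $u$ is unbounded. The only delicate point is making sure the induction simultaneously maintains the two conditions $\|w_{k}\|\geq \|w_{1}\|$ (so that $\|c\|<\|w_{k}\|$) and $\|w_{k}\|\geq \mu^{k-1}\|w_{1}\|$; both are achieved at once because the multiplicative factor $\mu>1$ keeps the sequence above its starting value.
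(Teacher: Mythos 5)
Your proposal is correct and follows essentially the same route as the paper: the reverse triangle inequality combined with the square inequality yields $\|w_{k+1}\|\geq \|w_{k}\|(\eta\|w_{k}\|-1)$, and the escape condition $\|w_{1}\|>\max\{2\eta^{-1},\|c\|\}$ makes the factor $\eta\|w_{1}\|-1$ (the paper's $1+\delta$, your $\mu$) strictly exceed $1$, giving geometric growth by induction. The paper merely phrases the reduction differently, proving the growth for any $u$ with $\|u\|>\lambda$ and then substituting $f^{n_{0}}_{c}(u)$ for $u$, whereas you reindex the tail of the orbit; the content is identical.
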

\begin{proof} Fix a positive real $\eta ,$ according to the hypothesis on the norm $\|\cdot \|.$ Let $c$ be an arbitrary element of $A$ and consider an element $u$ satisfying $\|u\|>\lambda .$ Then $\|u\|>\|c\|$ and
{\allowdisplaybreaks\begin{eqnarray*}\allowdisplaybreaks
\|f_{c}(u)\|&=&\|u^{2}+c\|\geq \|u^{2}\|-\|c\|\\
&>&  \|u^{2}\|-\|u\|\geq \eta \|u\|^{2}-\|u\|=\|u\|(\eta \|u\|-1).
\end{eqnarray*}}
Write $\displaystyle \eta \|u\|-1=1+\delta ,$ where $\delta $ is a positive real number. Then
{\allowdisplaybreaks\begin{eqnarray*}\allowdisplaybreaks
\|f_{c}(u)\|>\|u\|(1+\delta )>\|u\|>\lambda .
\end{eqnarray*}}
Assume that for some positive integer $l$ holds $\|f^{l}_{c}(u)\|>\|u\|(1+\delta )^{l}.$ Then for the integer $l+1$ we have
{\allowdisplaybreaks\begin{eqnarray*}\allowdisplaybreaks
\|f^{l+1}_{c}(u)\|&=&\|f_{c}(f^{l}_{c}(u))\|=\|\big(f^{l}_{c}(u)\big)^{2}+c\|\\
&\geq &\|\big(f^{l}_{c}(u)\big)^{2}\|-\|c\|> \eta \|f^{l}_{c}(u)\|^{2}-\|f^{l}_{c}(u)\|\\
&=& \|f^{l}_{c}(u)\|\big( \eta \|f^{l}_{c}(u)\|-1\big)> \|u\|(1+\delta )^{l}\big( \eta \|u\|-1\big)\\
&=&\|u\|(1+\delta )^{l+1}.
\end{eqnarray*}}
Thus, the principle of mathematical induction allows us to conclude that\break $\lim _{n\rightarrow +\infty} \|f^{n}_{c}(u)\|=+\infty .$ One gets the desired result by taking $f^{n_{0}}_{c}(u)$ in the place of $u.$
\end{proof}

\begin{corollary}\label{co31} Let $A$ be a real nonassociative algebra such that $(A,\|\cdot \|)$ is a normed space whose norm satisfies the square inequality. For every positive real number $\eta $ satisfying $\|u^{2}\|\geq \eta \|u\|^{2},$ for all elements $u$ of $A,$  holds the following: for every element $c$ of $A,$ if for some element $u$ of $A$ holds $\|u\|>\lambda=\max \{2\eta ^{-1},\|c\|\},$ then the orbit of the element $u$ is unbounded.
\end{corollary}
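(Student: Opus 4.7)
The plan is to reduce the corollary directly to Theorem~\ref{thm31} by exhibiting a positive integer $n_{0}$ with $\|f_{c}^{n_{0}}(u)\|>\lambda$. The natural candidate is $n_{0}=1$, so after fixing $\eta$ as in the hypothesis and assuming $\|u\|>\lambda$, I would simply verify that $\|f_{c}(u)\|>\lambda$, after which Theorem~\ref{thm31} yields the unboundedness of the orbit.

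For that verification I would replay the opening estimate from the proof of Theorem~\ref{thm31}. Since $\|u\|>\lambda\geq\|c\|$ and $\|u\|>\lambda\geq 2\eta^{-1}$, the triangle inequality together with the square inequality give
\begin{eqnarray*}
\|f_{c}(u)\| \;=\; \|u^{2}+c\| \;\geq\; \|u^{2}\|-\|c\| \;>\; \eta\|u\|^{2}-\|u\| \;=\; \|u\|\bigl(\eta\|u\|-1\bigr),
\end{eqnarray*}
and because $\eta\|u\|>2$, the factor $\eta\|u\|-1$ exceeds $1$. Hence $\|f_{c}(u)\|>\|u\|>\lambda$, which is precisely the hypothesis of Theorem~\ref{thm31} with $n_{0}=1$, and the conclusion follows.

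I do not anticipate a genuine obstacle: the corollary is morally the base case of the induction already performed in the proof of Theorem~\ref{thm31}, and the only mild friction is that the theorem is formulated in terms of the size of some iterate $f_{c}^{n_{0}}(u)$ rather than of $u$ itself. The one-line estimate above closes that gap. An equally clean alternative presentation would be to skip the appeal to Theorem~\ref{thm31} altogether and simply observe that its inductive argument is valid verbatim when initialised at $u$, so that $\|f_{c}^{n}(u)\|>\|u\|(1+\delta)^{n}\to+\infty$ is immediate from $\|u\|>\lambda$.
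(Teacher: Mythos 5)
Your proposal is correct and matches the paper's intent: the body of the proof of Theorem~\ref{thm31} literally begins by treating an element $u$ with $\|u\|>\lambda$ and showing $\lim_{n\to+\infty}\|f^{n}_{c}(u)\|=+\infty$, which is exactly the alternative you note at the end, and your primary route (verifying $\|f_{c}(u)\|>\|u\|>\lambda$ and invoking the theorem with $n_{0}=1$) is just a packaged form of the same opening estimate. No gaps.
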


\begin{theorem}\label{thm32} Let $A$ be a real nonassociative algebra such that $(A,\|\cdot \|)$ is a normed space whose norm satisfies the square inequality. For every positive real number $\eta $ satisfying $\|u^{2}\|\geq \eta \|u\|^{2},$ for all elements $u$ of $A,$  holds the following: for every element $c$ of $A,$ if for some positive integer $n_{0}$ holds
{\allowdisplaybreaks\begin{eqnarray*}\allowdisplaybreaks
\|f^{n_{0}}_{c}(0)\|> 2\eta ^{-1},
\end{eqnarray*}}
then the orbit of the element $0$ is unbounded.
\end{theorem}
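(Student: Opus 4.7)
The plan is to reduce this statement to Theorem \ref{thm31} by splitting on whether or not $\|c\|$ exceeds $2\eta^{-1}$. Recall that the quantity $\lambda$ appearing in Theorem \ref{thm31} is $\max\{2\eta^{-1},\|c\|\}$, while here the threshold has been sharpened to just $2\eta^{-1}$. The idea is that the improvement is essentially free: whenever $\|c\|$ lies above $2\eta^{-1}$, the orbit of $0$ is forced to be unbounded anyway, so the hypothesis $\|f^{n_0}_c(0)\|>2\eta^{-1}$ only does real work in the complementary regime.

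First I would fix $\eta>0$ satisfying $\|u^2\|\geq \eta\|u\|^2$ for all $u\in A$, fix $c\in A$, and set $\lambda=\max\{2\eta^{-1},\|c\|\}$ as in Theorem \ref{thm31}. In the case $\|c\|\leq 2\eta^{-1}$, one has $\lambda=2\eta^{-1}$, so the hypothesis $\|f^{n_0}_c(0)\|>2\eta^{-1}$ reads precisely $\|f^{n_0}_c(0)\|>\lambda$. Applying Theorem \ref{thm31} with $u=0$ immediately yields that the orbit of $0$ is unbounded.

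In the remaining case $\|c\|>2\eta^{-1}$, one has $\lambda=\|c\|$, and I would show that the orbit of $0$ is unbounded independently of the stated hypothesis. Compute $f^1_c(0)=c$ and
\begin{eqnarray*}
\|f^2_c(0)\|=\|c^{2}+c\|\geq \|c^{2}\|-\|c\|\geq \eta\|c\|^2-\|c\|=\|c\|(\eta\|c\|-1).
\end{eqnarray*}
Since $\|c\|>2\eta^{-1}$ gives $\eta\|c\|-1>1$, this chain produces $\|f^2_c(0)\|>\|c\|=\lambda$. Hence Theorem \ref{thm31} applies with $u=0$ and $n_0$ there taken to be $2$, and again the orbit of $0$ is unbounded.

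There is no real obstacle beyond recognizing the case split; the only substantive computation is the one-line estimate $\|f^2_c(0)\|>\|c\|$ in the second case, which is exactly the same use of the square inequality and the triangle inequality that drives the inductive step in the proof of Theorem \ref{thm31}. I would conclude by noting that in either case the orbit of $0$ is unbounded, which is what was to be shown.
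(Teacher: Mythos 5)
Your proof is correct, and it takes a genuinely different (and cleaner) route than the paper's. The paper re-runs the inductive growth argument of Theorem \ref{thm31} from scratch on the orbit of $0$: assuming $\|c\|>2\eta^{-1}$, it shows $\|f^{n}_{c}(0)\|>\|c\|(1+\delta)^{n-1}\rightarrow +\infty$ with $1+\delta=\eta\|c\|-1$, and then disposes of a general $n_{0}$ by ``taking $f^{n_{0}}_{c}(0)$ in the place of $c$.'' You instead reduce the whole statement to Theorem \ref{thm31} by splitting on whether $\|c\|\leq 2\eta^{-1}$ (where $\lambda=2\eta^{-1}$ and the hypothesis $\|f^{n_{0}}_{c}(0)\|>2\eta^{-1}$ is literally $\|f^{n_{0}}_{c}(0)\|>\lambda$) or $\|c\|>2\eta^{-1}$ (where one application of the square inequality gives $\|f^{2}_{c}(0)\|\geq \|c\|(\eta\|c\|-1)>\|c\|=\lambda$, so Theorem \ref{thm31} applies with $u=0$). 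The only computation you need is the same estimate that drives the paper's induction, so nothing new is required. What your organization buys is that the dependence on Theorem \ref{thm31} is explicit and the paper's closing substitution --- which, read literally, would replace the iterated map $f_{c}$ by $f_{v}$ with $v=f^{n_{0}}_{c}(0)$ and therefore needs precisely your case distinction to be made rigorous --- is avoided entirely; your second case also reproves Corollary \ref{co32} in passing. What the paper's self-contained induction buys is an explicit exponential lower bound on the escape rate, which your reduction does not exhibit (though it inherits it from Theorem \ref{thm31}).
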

\begin{proof} Fix a positive real $\eta ,$ according to the hypothesis on the norm $\|\cdot \|.$ Let $c$ be an arbitrary element of $A$ satisfying $\|c\|>2\eta ^{-1}.$ Then $\|f_{c}(0)\|=\|c\|>2\eta ^{-1}$ and
{\allowdisplaybreaks\begin{eqnarray*}\allowdisplaybreaks
\|f^{2}_{c}(0)\|&=&\|f_{c}(f_{c}(0))\|=\|c^{2}+c\|\geq \|c^{2}\|-\|c\|\\
&\geq &\eta \|c\|^{2}-\|c\|=\|c\|(\eta \|c\|-1).
\end{eqnarray*}}
Write $\eta \|c\|-1=1+\delta ,$ where $\delta $ is a positive real number. Then
{\allowdisplaybreaks\begin{eqnarray*}\allowdisplaybreaks
\|f^{2}_{c}(0)\|>\|c\|(1+\delta )>\|c\|>2\eta ^{-1}.
\end{eqnarray*}}
Assume that for some positive integer $l$ holds $\|f^{l}_{c}(0)\|>\|c\|(1+\delta )^{l-1}.$ Then for the integer $l+1$ we have
{\allowdisplaybreaks\begin{eqnarray*}\allowdisplaybreaks
\|f^{l+1}_{c}(0)\|&=&\|f_{c}(f^{l}_{c}(0))\|=\|\big(f^{l}_{c}(0)\big)^{2}+c\|\\
&\geq &\|\big(f^{l}_{c}(0)\big)^{2}\|-\|c\|> \eta \|f^{l}_{c}(0)\|^{2}-\|f^{l}_{c}(0)\|\\
&=& \|f^{l}_{c}(0)\|\big( \eta \|f^{l}_{c}(0)\|-1\big)> \|c\|(1+\delta )^{l-1}\big( \eta \|c\|-1\big)\\
&=&\|c\|(1+\delta )^{l}.
\end{eqnarray*}}
Thus, the principle of mathematical induction allows us to conclude that\break  $\lim _{n\rightarrow +\infty} \|f^{n}_{c}(0)\|=+\infty .$ One gets the desired result by taking $f^{n_{0}}_{c}(0)$ in the place of $c.$
\end{proof}

\begin{corollary}\label{co32} Let $A$ be a real nonassociative algebra such that $(A,\|\cdot \|)$ is a normed space whose norm satisfies the square inequality. For every positive real number $\eta $ satisfying $\|u^{2}\|\geq \eta \|u\|^{2},$ for all elements $u$ of $A,$ holds the following: if for an element $c$ of $A$ holds $\|c\|>2\eta ^{-1},$ then the orbit of the element $0$ is unbounded.
\end{corollary}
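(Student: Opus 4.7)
The plan is to derive this corollary as an immediate consequence of Theorem \ref{thm32}, by simply exhibiting $n_0 = 1$ as a positive integer for which the hypothesis of that theorem is satisfied. Fix $\eta$ and $c$ as in the statement, assuming $\|c\| > 2\eta^{-1}$.

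First I would compute $f_c^{1}(0)$. Since $f_c(u) = u^{2} + c$, and since in any real nonassociative algebra the distributive law forces $0 \cdot 0 = 0$ (because $0 \cdot 0 = (0+0)\cdot 0 = 0\cdot 0 + 0\cdot 0$), we have $0^{2} = 0$ and hence $f_c^{1}(0) = f_c(0) = c$. Consequently
\begin{eqnarray*}
\|f_c^{1}(0)\| \;=\; \|c\| \;>\; 2\eta^{-1}.
\end{eqnarray*}

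This is exactly the hypothesis of Theorem \ref{thm32} with the choice $n_0 = 1$. Applying that theorem, the orbit of $0$ under $f_c$ is unbounded, which is the desired conclusion.

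There is no real obstacle here; the only point worth emphasizing is that the corollary is weaker than Theorem \ref{thm32} (it tests only the first iterate, via $\|c\|$, rather than an arbitrary iterate $\|f_c^{n_0}(0)\|$), so the corollary follows mechanically from the theorem. The proof therefore reduces to the single observation that $f_c(0) = c$ in the algebra $A$.
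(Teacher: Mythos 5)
Your proof is correct and matches the paper's intent: the corollary is the special case $n_{0}=1$ of Theorem \ref{thm32}, using $f_{c}(0)=0^{2}+c=c$ so that $\|f_{c}^{1}(0)\|=\|c\|>2\eta^{-1}$ (indeed, the body of the paper's proof of Theorem \ref{thm32} is precisely the argument for this case). Nothing further is needed.
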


\subsection{Topological analysis}

The theorem that follows is a generalization of the classical results on the compactness of filled Julia sets \cite[Proposition 14.2]{Falconer} and Mandelbrot sets \cite[Theorem, pp. 338]{Gamelin}, originally proved on the complex dynamical systems, for the general case of the dynamics over the normed real nonassociative algebras not necessarily finite-dimensional.

\begin{theorem}\label{thm33} Let $A$ be a real nonassociative algebra such that $(A,\|\cdot \|)$ is a normed space whose norm satisfies the square inequality. For every positive real number $\eta $ satisfying $\|u^{2}\|\geq \eta \|u\|^{2},$ for all elements $u$ of $A,$ hold the following:
\begin{enumerate}
\item[(i)] for every element $c$ of $A$ the filled Julia set $\mathcal{K}_{A}(f_{c})$ associated with $f_{c}$ is a bounded and closed subset of $A;$
\item[(ii)] the Mandelbrot set $\mathcal{M}_{A}$ is a bounded and closed subset of $A.$
\end{enumerate}
\end{theorem}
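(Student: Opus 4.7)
The plan is to handle (i) and (ii) in parallel. For each, boundedness will follow directly from the corollaries of the previous subsection, and closedness will come from expressing the set as a countable intersection of preimages of closed balls under continuous iterates.

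For boundedness, Corollary \ref{co31} suffices for (i): fixing $\lambda = \max\{2\eta^{-1},\|c\|\}$, every element of norm exceeding $\lambda$ has unbounded orbit under $f_c$, so $\mathcal{K}_A(f_c)$ sits inside the closed ball of radius $\lambda$. Part (ii) is even more direct: Corollary \ref{co32} places $\mathcal{M}_A$ inside the closed ball of radius $2\eta^{-1}$.

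For closedness in (i), the decisive observation is that Theorem \ref{thm31} read contrapositively characterizes membership: $u \in \mathcal{K}_A(f_c)$ if and only if $\|f_c^n(u)\| \leq \lambda$ for every positive integer $n$. I would therefore write
\[
\mathcal{K}_A(f_c) = \bigcap_{n \geq 1} \{u \in A : \|f_c^n(u)\| \leq \lambda\},
\]
and identify each set in the intersection as the preimage under $f_c^n$ of the closed ball of radius $\lambda$. Continuity of $f_c^n$ reduces by induction on $n$ to continuity of $f_c(u) = u^2 + c$, which in turn follows from the joint continuity of the product --- an immediate consequence of the submultiplicativity carried by the paper's standing notion of a normed real nonassociative algebra, applied to the identity $u_k^2 - u^2 = u_k(u_k - u) + (u_k - u)u$. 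Each set in the intersection is then closed, and $\mathcal{K}_A(f_c)$ is closed as an intersection of closed sets. For (ii), Theorem \ref{thm32} analogously gives
\[
\mathcal{M}_A = \bigcap_{n \geq 1} \{c \in A : \|f_c^n(0)\| \leq 2\eta^{-1}\},
\]
and the same argument applies once one knows that $c \mapsto f_c^n(0)$ is continuous in $c$ --- again a routine induction using bilinearity and submultiplicativity.

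The main obstacle I anticipate is nothing more than the continuity of these iterates: entirely standard in the associative Banach-algebra setting, but worth an explicit verification here because the product is only bilinear, submultiplicative, and nonassociative, so expressions such as $f_c^2(u) = (u^2 + c)(u^2 + c) + c$ must be expanded and estimated term by term rather than by appealing to ring identities. Everything else is bookkeeping.
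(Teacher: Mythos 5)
Your proposal is correct and follows essentially the same route as the paper: boundedness via Corollaries \ref{co31} and \ref{co32}, and closedness via the uniform bound $\lambda$ extracted from Theorems \ref{thm31} and \ref{thm32} combined with continuity of the iterates, your intersection-of-preimages formulation being just the topological restatement of the paper's sequential limit argument. If anything, your explicit verification of continuity of $u\mapsto u^{2}$ (via $u_{k}^{2}-u^{2}=u_{k}(u_{k}-u)+(u_{k}-u)u$ and submultiplicativity) and of $c\mapsto f_{c}^{n}(0)$ is more careful than the paper, which merely asserts that the $f_{c}^{n}$ are continuous and that $f_{c_{l}}^{n}(0)$ converges because it is ``one algebraic expression'' in $c_{l}$.
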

\begin{proof} (i) Let $c$ be an element of $A$ and $(u_{l})_{l\geq 1}$ be a sequence of elements of $\mathcal{K}_{A}(f_{c})$ that converges to a limit $u$ of $A.$ First, we observe that if $\lambda =\max \{2\eta ^{-1},\|c\|\},$ then $\|u_{l}\|\leq \lambda ,$ $\|u\|\leq \lambda $ and $\|f^{n}_{c}(u_{l})\|\leq \lambda ,$ for all positive integers $l$ and $n,$ by the hypotheses on the sequence $(u_{l})_{l\geq 1},$ the limit $u$ and the Theorem \ref{thm31}. Since $(f^{n}_{c})_{n\geq 1}$ is a sequence of continuous mappings on $A,$ then $\lim _{l\rightarrow +\infty } f^{n}_{c} (u_{l})=f^{n}_{c}(u)$ and 
{\allowdisplaybreaks\begin{eqnarray*}\allowdisplaybreaks
\|f^{n}_{c}(u)\|=\lim _{l\rightarrow +\infty } \|f^{n}_{c} (u_{l})\|\leq \lambda ,
\end{eqnarray*}}
for every positive integer $n.$ It follows that the orbit of $u$ is bounded which shows that $u$ belongs to $\mathcal{K}_{A}(f_{c}).$ Therefore, the filled Julia set $\mathcal{K}_{A}(f_{c})$ is closed. The boundedness of $\mathcal{K}_{A}(f_{c})$ follows as an immediate consequence of the Corollary \ref{thm31}.\\\\
\noindent (ii) Let $(c_{l})_{l\geq 1}$ be a sequence of elements of $\mathcal{M}_{A}$ that converges to a limit $c$ of $A.$ It follows that $\|c_{l}\|\leq 2\eta ^{-1},$ $\|c\|\leq 2\eta ^{-1}$ and $\|f^{n}_{c_{l}}(0)\|\leq 2\eta ^{-1},$ for all positive integers $l$ and $n,$  by Theorem \ref{thm32}. Since there is only one algebraic way to express $f^{n}_{c_{l}}(0),$ in the variable $c_{l},$ for every positive integer $l,$ then $(f^{n}_{c_{l}}(0))_{l\geq 1}$ is a convergent sequence in $A$ with $\lim _{l\rightarrow +\infty } f^{n}_{c_{l}}(0)=f^{n}_{c}(0)$ and 
{\allowdisplaybreaks\begin{eqnarray*}\allowdisplaybreaks
\|f^{n}_{c}(0)\|=\lim _{l\rightarrow +\infty } \|f^{n}_{c_{l}}(0)\|\leq 
\lambda ,
\end{eqnarray*}}
for all positive integer $n.$ It follows that the orbit of $0$ is bounded which shows that $c$ belongs to $\mathcal{M}_{A}.$ Therefore, the Mandelbrot set $\mathcal{M}_{A}$ is closed. The boundedness of $\mathcal{M}_{A}$ follows as an immediate consequence of the Corollary \ref{co32}.
\end{proof}

\section{$m$-dimensional real nonassociative algebras, multiplication tables and norms}

A nonzero element $u$ of a $m$-dimensional real nonassociative algebra $\mathds{R}^{m}$ is called {\it idempotent} if it satisfies the condition $u^{2}=u.$

For a subset $S$ of a real nonassociative algebra $\mathds{R}^{m}$ the smallest subalgebra of $\mathds{R}^{m}$ containing $S$ is called {\it subalgebra generated} by $S.$ For a subalgebra $S$ of $\mathds{R}^{m}$ we define the following chains of subsets:
{\allowdisplaybreaks\begin{eqnarray*}\allowdisplaybreaks
S^{(0)}=S \,\,\textrm{and}\,\, S^{(k)}=S^{(k-1)}\cdot S^{(k-1)}
\end{eqnarray*}}
The subset $S^{(k)}$ is called the {\it $k$-th solvable power} of $S.$ An subalgebra $S$ of $\mathds{R}^{m}$ is called {\it solvable} if $S^{(k)}=0$ for some positive integer $k.$ The smallest integer $k$ with this property is called the {\it index of solvability} of $S.$

The multiplicative structure of a real nonassociative algebra $\mathds{R}^{m}$ is completely determined by its multiplication table for any basis. The standard presentation of a multiplication table is given by
{\allowdisplaybreaks\begin{eqnarray}\allowdisplaybreaks\label{tbmg}
\begin{tabular}{c|cccccc}
   $\cdot $ & $e_{1}$ & $\cdots $ & $e_{j}$ & $\cdots $ & $e_{m}$ \\ \hline
    $e_{1}$ &         &           &         &        &   \\
  $\vdots $ &         &           & $\vdots $ &        &   \\
    $e_{i}$ &         & $\cdots $  &$\sum _{k=1}^{m} \alpha _{ijk}e_{k}$  & $\cdots $ &   \\
  $\vdots $ &         &           &   $\vdots $      &  &   \\
    $e_{m}$ &         &           &         &  &   \\
\end{tabular}
\end{eqnarray}}
where $\{e_{1},e_{2},\cdots ,e_{m}\}$ is a basis and $\alpha _{ijk}$ $(i,j,k=1,\cdots ,m)$ are real numbers.

\subsection{Norm on $\mathds{R}^{m}$}

In this section we build a norm on an arbitrary real nonassociative  algebra $\mathds{R}^{m},$ whose original idea was developed in \cite[Theorem 5.1.]{Cook1}, for the case of an associative algebra, and has been refined and improved in \cite[Theorem 2.2.]{Cook2}, for the case of a product algebra with weighted $p$-norm.

Given a real nonassociative algebra $\mathds{R}^{m}$ with a basis $\mathcal{B}=\{e_{1},e_{2},\cdots ,e_{m}\}$ and a multiplication table given by (\ref{tbmg}) we may define, in the underlying linear space, an inner-product on $\mathds{R}^{m}$ by a bilinearly extending $g:\mathds{R}^{m}\times \mathds{R}^{m} \rightarrow \mathds{R},$ where $g(e_{i},e_{j})=\delta _{ij}$ $(i,j=1,2,\cdots ,m)$ and $\delta _{ij}$ is the Kronecker delta function. The {\it $g$-induced norm}
{\allowdisplaybreaks\begin{eqnarray}\allowdisplaybreaks\label{gnorm}
\|u\|^{2}=g(u,u),
\end{eqnarray}}
for all elements $u$ of $\mathds{R}^{m},$ has $\|e_{i}\|=1$ $(i=1,2,\cdots ,m),$ and satisfies 
{\allowdisplaybreaks\begin{eqnarray*}\allowdisplaybreaks
\|u\|^{2}=\sum _{i=1}^{m} a_{i}^{2},
\end{eqnarray*}}
for all elements $u=\sum _{i=1}^{m} a_{i}e_{i},$ where $a_{i}$ $(i=1,2,\cdots ,m)$ are real numbers.

\begin{lemma}\label{l41} The norm (\ref{gnorm}) is weakly submultiplicative with 
{\allowdisplaybreaks\begin{eqnarray*}\allowdisplaybreaks
M_{\mathds{R}^{m}}=\sqrt{\sum _{k=1}^{m} \big(\sum _{i=1}^{m} \sum _{j=1}^{m} \alpha _{ijk}^{2}\big)}.
\end{eqnarray*}}
\end{lemma}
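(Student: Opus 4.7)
The plan is to expand a generic product $uv$ in the basis $\mathcal{B}$, read off its $k$-th coordinate in terms of the structure constants $\alpha_{ijk}$, and then estimate $\|uv\|^{2}$ by a double application of the Cauchy--Schwarz inequality. First, I would write $u=\sum_{i=1}^{m} a_{i}e_{i}$ and $v=\sum_{j=1}^{m} b_{j}e_{j}$, use bilinearity of the product together with the multiplication table (\ref{tbmg}) to obtain
\[
uv \;=\; \sum_{k=1}^{m}\Big(\sum_{i=1}^{m}\sum_{j=1}^{m} a_{i}b_{j}\alpha_{ijk}\Big)\,e_{k},
\]
and conclude from (\ref{gnorm}) that
\[
\|uv\|^{2} \;=\; \sum_{k=1}^{m}\Big(\sum_{i=1}^{m}\sum_{j=1}^{m} a_{i}b_{j}\alpha_{ijk}\Big)^{2}.
\]

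Next, for each fixed $k$, I would apply the Cauchy--Schwarz inequality to the double sum, regarded as the inner product in $\mathds{R}^{m^{2}}$ of the vectors $(a_{i}b_{j})_{i,j}$ and $(\alpha_{ijk})_{i,j}$, which yields
\[
\Big(\sum_{i,j} a_{i}b_{j}\alpha_{ijk}\Big)^{2} \;\leq\; \Big(\sum_{i,j} a_{i}^{2}b_{j}^{2}\Big)\Big(\sum_{i,j}\alpha_{ijk}^{2}\Big).
\]
Factoring the first bracket as $\bigl(\sum_{i} a_{i}^{2}\bigr)\bigl(\sum_{j} b_{j}^{2}\bigr)=\|u\|^{2}\|v\|^{2}$, summing on $k$ and taking square roots gives exactly
\[
\|uv\| \;\leq\; \sqrt{\sum_{k=1}^{m}\Big(\sum_{i=1}^{m}\sum_{j=1}^{m}\alpha_{ijk}^{2}\Big)}\;\|u\|\,\|v\| \;=\; M_{\mathds{R}^{m}}\,\|u\|\,\|v\|,
\]
which is the weak submultiplicativity claimed.

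There is no real obstacle here; the only thing to be careful about is keeping the three indices $i,j,k$ in the right roles and applying Cauchy--Schwarz to the pair of $m^{2}$-tuples indexed by $(i,j)$ rather than getting tangled up in a triple sum. The constant $M_{\mathds{R}^{m}}$ arises naturally as the $\ell^{2}$-norm of the full structure tensor $(\alpha_{ijk})$, which is finite because the algebra is finite-dimensional.
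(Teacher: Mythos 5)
Your proposal is correct and follows essentially the same route as the paper: expand $uv$ in the basis to read off the coordinates $\sum_{i,j}\alpha_{ijk}a_{i}b_{j}$, apply Cauchy--Schwarz in $\mathds{R}^{m^{2}}$ for each fixed $k$, and factor $\sum_{i,j}a_{i}^{2}b_{j}^{2}=\|u\|^{2}\|v\|^{2}$. The only cosmetic difference is that the paper first passes to absolute values before invoking Cauchy--Schwarz, a step your direct application makes unnecessary.
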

\begin{proof} For any elements $u=\sum _{i=1}^{m} a_{i}e_{i}$ and $v=\sum _{i=1}^{m} b_{i}e_{i}$ where $a_{i},b_{i}$ $(i=1,\cdots ,m)$ are real numbers,  we have 
{\allowdisplaybreaks\begin{eqnarray*}\allowdisplaybreaks
\|uv\|^{2}
&=&\|(\sum _{i=1}^{m} a_{i}e_{i}) (\sum _{i=1}^{m} b_{i}e_{i})\|^{2}=\|\sum _{i=1}^{m} \sum _{j=1}^{m} a_{i}b_{j}e_{i}e_{j}\|^{2}\\
&=&\|\sum _{i=1}^{m} \sum _{j=1}^{m} a_{i}b_{j}\big( \sum _{k=1}^{m} \alpha _{ijk} e_{k}\big)\|^{2}=\|\sum _{k=1}^{m} \big(\sum _{i=1}^{m} \sum _{j=1}^{m} \alpha _{ijk}a_{i}b_{j}\big) e_{k}\|^{2}\\
&=&\sum _{k=1}^{m} \big(\sum _{i=1}^{m} \sum _{j=1}^{m} \alpha _{ijk}a_{i}b_{j}\big)^{2}\leq \sum _{k=1}^{m} \big(\sum _{i=1}^{m} \sum _{j=1}^{m} |\alpha _{ijk}||a_{i}||b_{j}|\big)^{2}\\
&\leq &\sum _{k=1}^{m} \big(\sum _{i=1}^{m} \sum _{j=1}^{m} \alpha _{ijk}^{2}\big)\big(\sum _{i=1}^{m} \sum _{j=1}^{m} a_{i}^{2}b_{j}^{2}\big)\,\textrm{(Cauchy–Schwarz inequality)}\\
&=&\big(\sum _{k=1}^{m} \big(\sum _{i=1}^{m} \sum _{j=1}^{m} \alpha _{ijk}^{2}\big)\big)\big(\sum _{i=1}^{m} \sum _{j=1}^{m} a_{i}^{2}b_{j}^{2}\big)\\
&=&\big(\sum _{k=1}^{m} \big(\sum _{i=1}^{m} \sum _{j=1}^{m} \alpha _{ijk}^{2}\big)\big)\big(\sum _{i=1}^{m} a_{i}^{2}\big)\big(\sum _{j=1}^{m}b_{j}^{2}\big)\\
&=&\big(\sum _{k=1}^{m} \big(\sum _{i=1}^{m} \sum _{j=1}^{m} \alpha _{ijk}^{2}\big)\big)\|u\|^{2}\|v\|^{2}
\end{eqnarray*}}
which yields $\|uv\|\leq \Big(\sqrt{\sum _{k=1}^{m} \big(\sum _{i=1}^{m} \sum _{j=1}^{m} \alpha _{ijk}^{2}\big)}\Big)\|u\|\|v\|.$ 
\end{proof}

Given that any two norms on the linear space $\mathds{R}^{m}$ are equivalent and if a norm $\|\cdot \|$ satisfies the square inequality then so does every equivalent norm to $\|\cdot \|,$ in the rest of this paper all the norms considered in any real nonassociative algebras $\mathds{R}^{m}$ will be type (\ref{gnorm}).

\begin{theorem}\label{thm41} Let $CD(n,\mathds{R})$ be the real nonassociative algebra $\mathds{R}^{2^{m}}$ with a basis $\mathcal{B}=\{e_{0}, \cdots ,e_{2^{m}-1}\}$ and a multiplication table satisfying the properties (1), (2) and (3), as given in \cite[pp. 56]{Cohen}. Then, the norm (\ref{gnorm}) satisfies the square property.
\end{theorem}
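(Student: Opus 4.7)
The plan is to exploit the Cayley--Dickson structure by splitting every element into a scalar part along $e_0$ and a ``purely imaginary'' part. Concretely, for $u \in CD(n,\mathds{R})$ I would write $u = a e_0 + v$ where $a \in \mathds{R}$ and $v$ lies in the span $V$ of $\{e_1, \ldots, e_{2^m-1}\}$. Since the basis $\mathcal{B}$ is $g$-orthonormal, $\|u\|^2 = a^2 + \|v\|^2$ and $V$ is $g$-orthogonal to $\mathrm{span}\{e_0\}$.

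Next, I would extract from properties (1), (2), (3) of the multiplication table on pp.~56 of \cite{Cohen} the three consequences I actually need: $e_0$ is a two-sided identity, $e_i^2 = -e_0$ for every $i \geq 1$, and $e_i e_j + e_j e_i = 0$ for distinct $i, j \geq 1$. Using bilinearity of the product, these imply that for $v = \sum_{i\geq 1} a_i e_i$ the off-diagonal terms cancel pairwise and the diagonal terms contribute $-\|v\|^2 e_0$, so that $v^2 = -\|v\|^2 e_0$.

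With this formula in hand, and observing that $a e_0$ commutes with $v$ because $e_0$ is the identity, expansion yields $u^2 = (a^2 - \|v\|^2) e_0 + 2av$. The summands $(a^2 - \|v\|^2) e_0$ and $2av \in V$ are $g$-orthogonal, so the Pythagorean identity for the $g$-induced norm gives
\begin{align*}
\|u^2\|^2 = (a^2 - \|v\|^2)^2 + 4 a^2 \|v\|^2 = (a^2 + \|v\|^2)^2 = \|u\|^4,
\end{align*}
and taking square roots completes the argument.

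The step I expect to require the most care is extracting the anti-commutation $e_i e_j + e_j e_i = 0$ (for distinct $i,j \geq 1$) from the axioms of Cohen; everything after that is a routine orthogonal decomposition followed by a completed square. It is worth emphasising that this argument remains valid at \emph{every} level of the Cayley--Dickson tower, even though full multiplicativity $\|uv\| = \|u\|\|v\|$ breaks down for the sedenions and beyond, because the square property only relies on the weaker identity $v^2 = -\|v\|^2 e_0$ restricted to the purely imaginary subspace $V$.
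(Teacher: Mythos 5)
Your proposal is correct and is essentially the same argument as the paper's: the paper expands $u=\sum_{k=0}^{2^{m}-1}a_{k}e_{k}$ coordinate-wise to get $u^{2}=(a_{0}^{2}-\sum_{k\geq 1}a_{k}^{2})e_{0}+\sum_{k\geq 1}2a_{0}a_{k}e_{k}$, which is exactly your $(a^{2}-\|v\|^{2})e_{0}+2av$, and then completes the square in the same way. Your version is slightly more explicit about which consequences of Cohen's axioms (identity $e_{0}$, $e_{i}^{2}=-e_{0}$, anticommutation of distinct imaginary units) justify the expansion, a step the paper takes for granted.
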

\begin{proof} Let $u=\sum _{k=1}^{m} a_{k}e_{k}$ be any element of $CD(n,\mathds{R}).$ Then
{\allowdisplaybreaks\begin{eqnarray*}\allowdisplaybreaks
\|u^{2}\|^{2}&=&\|(\sum _{k=0}^{2^{m}-1} a_{k}e_{k}) (\sum _{k=0}^{2^{m}-1} a_{k}e_{k})\|^{2}\\
&=&\|(a_{0}^{2}-\sum _{k=1}^{2^{m}-1} a_{k}^{2})e_{0} +\sum _{k=1}^{2^{m}-1} (2a_{0}a_{k}) e_{k}\|^{2}\\
&=&(a_{0}^{2}-\sum _{k=1}^{2^{m}-1} a_{k}^{2})^{2}+\sum _{k=1}^{2^{m}-1} (2a_{0}a_{k})^{2}\\
&=&(a_{0}^{2})^{2}+(\sum _{k=1}^{2^{m}-1} a_{k}^{2})^{2}-2a_{0}^{2}(\sum _{k=1}^{2^{m}-1} a_{k}^{2})+\sum _{k=1}^{2^{m}-1} (2a_{0}a_{k})^{2}\\
&=&(a_{0}^{2})^{2}+(\sum _{k=1}^{2^{m}-1} a_{k}^{2})^{2}+2a_{0}^{2}(\sum _{k=1}^{2^{m}-1} a_{k}^{2})\\
&=&(a_{0}^{2}+\sum _{k=1}^{2^{m}-1} a_{k}^{2})^{2}\\
&=&(\|u\|^{2})^{2}.
\end{eqnarray*}}
This shows that $\|u^{2}\|=\|u\|^{2},$ for all elements of $CD(n,\mathds{R}).$
\end{proof}

As a consequence of Theorems \ref{thm33} and \ref{thm41}, we have the following result.

\begin{corollary}\label{} Let $CD(n,\mathds{R})$ be the real nonassociative algebra $\mathds{R}^{2^{m}}$ with a basis $\mathcal{B}=\{e_{0}, \cdots ,e_{2^{m}-1}\}$ and a multiplication table satisfying the properties (1), (2) and (3), as given in \cite[pp. 56]{Cohen}. Then, 
\begin{enumerate}
\item[(i)] for every element $c$ of $CD(n,\mathds{R})$ the filled Julia set $\mathcal{K}_{CD(n,\mathds{R})}(f_{c})$ associated with $f_{c}$ is a compact set of $CD(n,\mathds{R});$
\item[(ii)] the Mandelbrot set $\mathcal{M}_{CD(n,\mathds{R})}$ is a compact set of $CD(n,\mathds{R}).$
\end{enumerate}
\end{corollary}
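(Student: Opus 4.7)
The plan is to assemble this corollary from the two preceding ingredients in the paper together with one standard fact from elementary analysis. Theorem \ref{thm41} establishes that on $CD(n,\mathds{R})$ the $g$-induced norm satisfies the square property $\|u^{2}\|=\|u\|^{2}$, which is exactly the square inequality with constant $\eta=1$. Consequently the hypotheses of Theorem \ref{thm33} are met, so for each $c\in CD(n,\mathds{R})$ the filled Julia set $\mathcal{K}_{CD(n,\mathds{R})}(f_{c})$ is closed and bounded, and the Mandelbrot set $\mathcal{M}_{CD(n,\mathds{R})}$ is likewise closed and bounded.

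To upgrade closed and bounded to compact, I would invoke finite-dimensionality. The underlying linear space of $CD(n,\mathds{R})$ is $\mathds{R}^{2^{m}}$, and the $g$-induced norm coincides with the standard Euclidean norm relative to the basis $\mathcal{B}$. By the Heine–Borel theorem every closed and bounded subset of $\mathds{R}^{2^{m}}$ is compact; equivalently, since any two norms on a finite-dimensional real vector space are equivalent, closedness and boundedness with respect to the $g$-induced norm imply compactness. Applying this to $\mathcal{K}_{CD(n,\mathds{R})}(f_{c})$ gives (i), and applying it to $\mathcal{M}_{CD(n,\mathds{R})}$ gives (ii).

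There is no substantive obstacle here; the corollary is a direct combination of Theorems \ref{thm33} and \ref{thm41} with Heine–Borel, and the only remark worth making explicit in the write-up is that the square property yields the square inequality with the particular value $\eta=1$, so that the explicit bounding constants from Theorem \ref{thm33} take the concrete form $\lambda=\max\{2,\|c\|\}$ for the filled Julia set and $2$ for the Mandelbrot set.
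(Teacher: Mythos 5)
Your proposal is correct and follows exactly the route the paper intends: the square property from Theorem \ref{thm41} gives the square inequality with $\eta=1$, Theorem \ref{thm33} then yields closedness and boundedness, and the Heine--Borel theorem in the finite-dimensional space $\mathds{R}^{2^{m}}$ upgrades this to compactness. The paper states the corollary without proof as an immediate consequence of these two theorems, so your write-up simply makes the same argument explicit.
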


\section{$2$-dimensional real nonassociative algebras}

The best known two-dimensional real nonassociative algebras are the number systems of the form $a+bi$ with addition rule
{\allowdisplaybreaks\begin{eqnarray*}\allowdisplaybreaks
(a+bi)+(c+di)=(a+c)+(b+d)i
\end{eqnarray*}}
and multiplication rule
{\allowdisplaybreaks\begin{eqnarray*}\allowdisplaybreaks
(a+bi)\cdot (c+di)=ac+adi+bci+bdi^{2}=(ac+bdp)+(ad+bc+bdq)i,
\end{eqnarray*}}
where $i^{2}=p+qi,$ with $p$ and $q$ two fixed real numbers, which can be reduced up to isomorphisms to one of the following three types: (i) the number system with $i^{2}=-1$ (called complex number algebra $\mathds{C}$); (ii) the number system with $i^{2}=1$ (called perplex or hyperbolic number algebra $\mathds{P}$); (iii) the number system with $i^{2}=0$ (called dual number algebra $\mathds{D}$). They all have the set $\{1,i\}$ as a basis, where $1$ is the unit element \cite{Kantor}.

In the case of an arbitrary two-dimensional real nonassociative algebra the multiplication table takes the form
{\allowdisplaybreaks\begin{eqnarray*}\allowdisplaybreaks
\begin{tabular}{c|cc}
\multicolumn{3}{c}{Table I} \\
   $\cdot $ & $e_{1}$ & $e_{2}$ \\ \hline
  $e_{1}$ & $a_{11}e_{1}+b_{11}e_{2}$ & $a_{12}e_{1}+b_{12}e_{2}$ \\
  $e_{2}$ & $a_{21}e_{1}+b_{21}e_{2}$ & $a_{22}e_{1}+b_{22}e_{2}$
\end{tabular},
\end{eqnarray*}}
where $\{e_{1},e_{2}\}$ is a basis and $a_{ij},b_{ij}$ are real numbers.

We denote $A=a_{12}+a_{21}$ and $B=b_{12}+b_{21}.$

We are interested only in algebras of dimension two with at least one idempotent element since they are closer to the complex, perplex and dual numbers. An arbitrary two-dimensional real nonassociative algebra $\mathds{R}^{2}$  with at least one idempotent admits a multiplication table as follows
{\allowdisplaybreaks\begin{eqnarray*}\allowdisplaybreaks
\begin{tabular}{c|cc}
\multicolumn{3}{c}{Table II} \\
   $\cdot $ & $e_{1}$ & $e_{2}$ \\ \hline
  $e_{1}$ & $e_{1}$ & $a_{12}e_{1}+b_{12}e_{2}$ \\
  $e_{2}$ & $a_{21}e_{1}+b_{21}e_{2}$ & $a_{22}e_{1}+b_{22}e_{2}$
\end{tabular},
\end{eqnarray*}}
with respect to suitable choice of a basis $\{e_{1},e_{2}\}$ and real numbers $a_{ij},b_{ij}.$

\begin{lemma}\label{l51} Let $\mathds{R}^{2}$ be a real nonassociative algebra with an idempotent $u$ and a nonzero element $v.$ Then the subalgebra generated by $v$ is solvable if and only if $v^{2}=0.$
\end{lemma}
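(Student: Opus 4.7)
The plan is to treat the two implications separately, and for the forward direction split according to whether $v$ and $v^2$ are linearly dependent or independent.

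For the easy direction $(\Leftarrow)$, if $v^{2}=0,$ then every element of the subalgebra $S$ generated by $v$ lies in $\mathds{R}v$ (any product of scalar multiples of $v$ collapses to $0$). Hence $S = \mathds{R}v,$ and $S^{(1)} = S\cdot S \subseteq \mathds{R}v^{2} = \{0\},$ so $S$ is solvable.

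For the harder direction $(\Rightarrow)$, I would argue contrapositively: assume $v^{2}\neq 0$ and show $S$ is not solvable. I split into two cases based on the linear relation between $v$ and $v^{2}$ inside the two-dimensional space $\mathds{R}^{2}.$

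In the first case, $v^{2}=\alpha v$ for some nonzero $\alpha\in\mathds{R}.$ Then $S = \mathds{R}v$ is one-dimensional, and a direct computation gives $S\cdot S = \mathds{R}(v\cdot v) = \mathds{R}(\alpha v) = \mathds{R}v = S.$ By induction $S^{(k)}=S\neq 0$ for every $k,$ so $S$ is not solvable.

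In the second case, $\{v,v^{2}\}$ is linearly independent. Since the ambient algebra is two-dimensional, $\mathrm{span}\{v,v^{2}\}=\mathds{R}^{2},$ and as $S$ contains both $v$ and $v^{2},$ we get $S=\mathds{R}^{2}.$ Now I would invoke the hypothesis that $\mathds{R}^{2}$ contains an idempotent $u$: since $u\in S$ and $u\cdot u = u,$ a trivial induction shows $u\in S^{(k)}$ for all $k\geq 0$ (if $u\in S^{(k)},$ then $u = u\cdot u \in S^{(k)}\cdot S^{(k)} = S^{(k+1)}$). Thus $S^{(k)}\neq 0$ for all $k,$ contradicting solvability.

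The only mildly delicate step is the identification $S=\mathds{R}^{2}$ in the second case, which rests on two observations: that the subalgebra generated by $v$ contains all products $v^{2}, v\cdot v^{2}, v^{2}\cdot v, v^{2}\cdot v^{2},\ldots,$ and that any two-dimensional subspace of $\mathds{R}^{2}$ is all of $\mathds{R}^{2}$ and therefore automatically closed under multiplication. The role of the idempotent $u$ is essential: without it one could not rule out the possibility that $(\mathds{R}^{2})^{(k)}$ shrinks to $0$.
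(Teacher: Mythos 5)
Your proof is correct, and it is essentially a reorganized (and more explicit) version of the paper's argument rather than a fundamentally different one. The paper proves the forward direction by contradiction from a single linear dependence $au+bv+cv^{2}=0$, which must be nontrivial because three vectors cannot be independent in a two-dimensional space; assuming $S$ solvable and $v^{2}\neq 0$, it forces $a=b=c=0$ (if $a\neq 0$ the idempotent $u$ lies in $S$; if $b\neq 0$ then $-(cb^{-1})v$ is an idempotent lying in $S$; if $c\neq 0$ then $v^{2}=0$), each step resting on the fact that a nonzero idempotent $w$ belonging to a subalgebra $S$ satisfies $w\in S^{(k)}$ for all $k$ and so blocks solvability. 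You split instead on whether $\{v,v^{2}\}$ is linearly dependent, identify $S$ explicitly as $\mathds{R}v$ or as all of $\mathds{R}^{2}$, and exhibit a nonzero element of every solvable power. The ingredients are the same (the dimension count plus the idempotent obstruction to solvability), but your organization buys two things: it supplies the easy implication that $v^{2}=0$ gives solvability, which the paper omits as immediate, and it makes visible where the hypothesis of an idempotent $u$ is actually needed (only when $\{v,v^{2}\}$ is independent; in the dependent case you get $S\cdot S=S$ directly, which amounts to the paper's observation that some scalar multiple of $v$ is itself idempotent). The paper's version is shorter but leaves the solvability assumption implicit in phrases such as ``$a=0$ since $u$ is an idempotent,'' which your write-up spells out.
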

\begin{proof} If $v^{2}\neq 0,$ let $a,b,c$ be real numbers such that $au+bv+cv^{2}=0.$ Then $au=-bv-cv^{2}$ which implies that $a=0$ since $u$ is an idempotent. This results
$cv^{2}=-bv.$ If $b\neq 0,$ then we should have $c\neq 0$ which yields that $-(cb^{-1})v$ is an idempotent element what is an absurd. It follows that $b=c=0$ what is again an absurd.
\end{proof}

In the following subsections we build on ideas of \cite{Althoen}.

\subsection{Algebras that have a unique idempotent and don’t have solvable subalgebras generated by one element}
In this case, for any base $\{e_{1},e_{2}\}$ of $\mathds{R}^{2},$ where $e_{1}$ is a single-idempotent whose multiplication table is given by
Table II, we have $e_{2}^{2}\neq 0,$ by Lemma \ref{l51}. It follows that $a_{22}\neq 0,$ otherwise $\displaystyle (\frac{1}{b_{22}})e_{2}$ is another idempotent.

The proof for the result that follows is immediate and so we will omit it.

\begin{lemma}\label{l52} Let $\mathds{R}^{2}$ be a real nonassociative algebra whose multiplication table is given by Table II, where $e_{1}$ is a single-idempotent. Then there is a basis of $\mathds{R}^{2}$ $\{e_{1},f_{2}\}$ (where $f_{2}=\alpha e_{1}+\beta e_{2}$ with $\beta \neq 0$) such that $f_{2}^{2}=\pm e_{1}$ if and only if the system of equations
{\allowdisplaybreaks\begin{eqnarray}\allowdisplaybreaks
\left\{\begin{array}{ccccccc}\label{se1}
\alpha ^{2} & + & A\alpha \beta & + & a_{22}\beta ^{2} & = & \pm 1 \\
            &   & B\alpha \beta & + & b_{22}\beta ^{2} & = & 0
\end{array}\right.
\end{eqnarray}}
can be solved for at least one nonzero value of $\beta .$
\end{lemma}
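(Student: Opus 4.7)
The plan is to carry out the direct coordinate computation of $f_2^2$ in the given basis and read off coefficients. The lemma is really a bookkeeping statement, so the strategy is simply to expand and match; the authors' remark that the proof is ``immediate'' is consistent with this.

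First I would set $f_2 = \alpha e_1 + \beta e_2$ and use bilinearity of the product together with Table II to obtain
\[
f_2^2 = \alpha^2 e_1^2 + \alpha\beta(e_1 e_2 + e_2 e_1) + \beta^2 e_2^2.
\]
Substituting $e_1^2 = e_1$, $e_1 e_2 = a_{12}e_1 + b_{12}e_2$, $e_2 e_1 = a_{21}e_1 + b_{21}e_2$, and $e_2^2 = a_{22}e_1 + b_{22}e_2$, and using the abbreviations $A = a_{12}+a_{21}$ and $B = b_{12}+b_{21}$ introduced just before the lemma, I would collect coefficients in the basis $\{e_1,e_2\}$ to arrive at
\[
f_2^2 = \bigl(\alpha^2 + A\alpha\beta + a_{22}\beta^2\bigr)\,e_1 + \bigl(B\alpha\beta + b_{22}\beta^2\bigr)\,e_2.
\]

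For the forward implication, I would assume that $\{e_1, f_2\}$ is a basis with $f_2^2 = \pm e_1$. Linear independence of $e_1$ and $f_2$ is equivalent to $\beta \neq 0$, and equating the expansion above with $\pm e_1 + 0\cdot e_2$ coefficient-by-coefficient yields exactly the system (\ref{se1}) with a nonzero value of $\beta$.

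For the converse, I would start from a solution $(\alpha,\beta)$ of (\ref{se1}) with $\beta \neq 0$, set $f_2 = \alpha e_1 + \beta e_2$, observe that $\beta \neq 0$ guarantees that $\{e_1, f_2\}$ is a basis of $\mathds{R}^{2}$, and invoke the same expansion of $f_2^2$ in reverse: the two scalar equations in (\ref{se1}) force the $e_1$-coefficient of $f_2^2$ to be $\pm 1$ and the $e_2$-coefficient to vanish, so $f_2^2 = \pm e_1$. There is no genuine obstacle in the argument; the only thing to be careful about is consistently choosing the same sign $\pm$ on both sides of the first equation of (\ref{se1}) and in the conclusion $f_2^2 = \pm e_1$.
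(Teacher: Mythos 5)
Your proposal is correct and is exactly the direct coefficient-matching computation the paper has in mind when it declares the proof of Lemma \ref{l52} ``immediate'' and omits it. Nothing is missing: the expansion of $f_2^2$ is right, and the equivalence between $\beta\neq 0$ and $\{e_1,f_2\}$ being a basis is correctly used in both directions.
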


\begin{lemma}\label{l53} Let $\mathds{R}^{2}$ be a real nonassociative algebra whose multiplication table is given by Table II, where $e_{1}$ is a single-idempotent and $b_{22}=0.$ Then there is a basis of $\mathds{R}^{2}$ $\{e_{1},f_{2}\}$ (where $f_{2}=\alpha e_{1}+\beta e_{2}$ with $\beta \neq 0$) such that $f_{2}^{2}=\pm e_{1}.$
\end{lemma}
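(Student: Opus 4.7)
My plan is to reduce to the system (\ref{se1}) via Lemma \ref{l52} and exhibit an explicit solution, using that $a_{22}\ne 0$ under the present hypotheses.

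First I would invoke Lemma \ref{l52}: producing the required basis $\{e_{1},f_{2}\}$ with $f_{2}=\alpha e_{1}+\beta e_{2}$ and $f_{2}^{2}=\pm e_{1}$ is equivalent to finding a nonzero $\beta$ (and some $\alpha$) satisfying the system (\ref{se1}). Substituting the hypothesis $b_{22}=0,$ the second equation collapses to $B\alpha \beta =0,$ so the system reduces to
\[
\alpha ^{2}+A\alpha \beta +a_{22}\beta ^{2}=\pm 1,\qquad B\alpha \beta =0.
\]

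Next I would recall the observation made in the paragraph preceding Lemma \ref{l52}: when $e_{1}$ is the unique idempotent one must have $a_{22}\ne 0,$ for otherwise $(1/b_{22})e_{2}$ would be a second idempotent (and the case $b_{22}=a_{22}=0$ is excluded since $e_{2}^{2}\ne 0$ by Lemma \ref{l51}). Having $a_{22}\ne 0$ in hand is the key piece of information driving the argument.

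Then I would simply take $\alpha =0.$ The second equation is then trivially satisfied regardless of the value of $B,$ and the first equation becomes $a_{22}\beta ^{2}=\pm 1.$ Since $a_{22}\ne 0,$ I pick the sign $+1$ if $a_{22}>0$ and $-1$ if $a_{22}<0,$ and set $\beta =1/\sqrt{|a_{22}|}\ne 0.$ This produces the required nonzero $\beta,$ so Lemma \ref{l52} yields a basis $\{e_{1},f_{2}\}$ with $f_{2}^{2}=\pm e_{1}.$

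There is no real obstacle here: the only point that requires care is ensuring $a_{22}\ne 0,$ which is forced by the single-idempotent hypothesis together with $b_{22}=0$ and $e_{2}^{2}\ne 0$ (Lemma \ref{l51}). Once that is recorded, the choice $\alpha=0$ makes the system decouple and solves it immediately.
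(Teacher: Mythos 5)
Your proposal is correct and follows the paper's own argument exactly: both invoke Lemma \ref{l52}, set $\alpha=0$ so that the second equation of (\ref{se1}) holds trivially, and choose the sign $\pm 1$ to match the sign of $a_{22}$ (which is nonzero by the single-idempotent discussion preceding Lemma \ref{l52}). You merely make explicit the value $\beta=1/\sqrt{|a_{22}|}$ and the justification that $a_{22}\neq 0$, which the paper leaves implicit.
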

\begin{proof} By Lemma \ref{l52}, the system of equations (\ref{se1}) can be solved taking $\alpha =0$ and selecting the sign on the right to match the sign of the coefficient of $\beta ^{2},$ in the first equation.
\end{proof}

\begin{lemma}\label{l54} Let $\mathds{R}^{2}$ be a real nonassociative algebra whose multiplication table is given by Table II, where $e_{1}$ is a single-idempotent and $b_{22}\neq 0.$ Then there is a basis of $\mathds{R}^{2}$ $\{e_{1},f_{2}\}$ (where $f_{2}=\alpha e_{1}+\beta e_{2}$ with $\beta \neq 0$) such that $f_{2}^{2}=\pm e_{1}$ if and only if $B\neq 0$ and $1-AB/b_{22}+B^{2}a_{22}/b_{22}^{2}\neq 0.$
\end{lemma}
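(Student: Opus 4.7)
The plan is to apply Lemma~\ref{l52} and analyze precisely when the system~(\ref{se1}) admits a solution with $\beta\neq 0$. The argument reduces to a two-step elimination followed by a sign-matching observation, so I do not anticipate any serious obstacle.

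First, I would exploit the second equation of~(\ref{se1}), namely $B\alpha\beta+b_{22}\beta^{2}=0$. Dividing by the nonzero $\beta$ gives $B\alpha+b_{22}\beta=0$. If $B=0$, this would force $b_{22}\beta=0$, contradicting both $b_{22}\neq 0$ and $\beta\neq 0$; hence $B\neq 0$ is necessary, and then $\alpha=-b_{22}\beta/B$ is uniquely determined by $\beta$.

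Second, I would substitute $\alpha=-b_{22}\beta/B$ into the first equation of~(\ref{se1}) and collect terms in $\beta^{2}$. The equation becomes
\[
\beta^{2}\cdot\frac{b_{22}^{2}}{B^{2}}\left(1-\frac{AB}{b_{22}}+\frac{a_{22}B^{2}}{b_{22}^{2}}\right)=\pm 1.
\]
Since the prefactor $b_{22}^{2}/B^{2}$ is positive, this has a real solution $\beta\neq 0$ if and only if the bracketed quantity $1-AB/b_{22}+a_{22}B^{2}/b_{22}^{2}$ is nonzero: when it is nonzero, one chooses the sign $\pm$ on the right-hand side to match the sign of the bracket, thereby making $\beta^{2}$ a positive real number from which a nonzero real $\beta$ can be extracted; when the bracket vanishes, the left-hand side is identically $0$ and neither choice of sign yields a solution. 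Combining both necessary conditions $B\neq 0$ and $1-AB/b_{22}+a_{22}B^{2}/b_{22}^{2}\neq 0$ with this sufficiency argument delivers the stated equivalence. The only subtle point is the sign matching for $\pm 1$, and this is handled by the freedom to pick the sign in the statement $f_{2}^{2}=\pm e_{1}$.
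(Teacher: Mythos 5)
Your proof is correct and follows essentially the same route as the paper: apply Lemma~\ref{l52}, use the second equation of~(\ref{se1}) to eliminate one variable (you solve for $\alpha$ in terms of $\beta$, the paper solves for $\beta$ in terms of $\alpha$, which is an immaterial difference), reduce the first equation to a single quadratic condition whose coefficient is the bracket $1-AB/b_{22}+B^{2}a_{22}/b_{22}^{2}$, and choose the sign of $\pm 1$ to match. Your explicit observation that $B=0$ would force $\beta=0$ is a small expository improvement over the paper, which leaves the necessity of $B\neq 0$ implicit.
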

\begin{proof} By Lemma \ref{l52}, the system of equations (\ref{se1}) is equivalent to the system of equations
{\allowdisplaybreaks\begin{eqnarray*}\allowdisplaybreaks
\left\{\begin{array}{cccccc}
(1-AB/b_{22}+B^{2}a_{22}/b_{22}^{2})\alpha ^{2} & & & & = & \pm 1 \\
                                                & B\alpha & + & b_{22}\beta & = & 0
\end{array}\right.
\end{eqnarray*}}
which can be solved by selecting the sign on the right to match the sign of the coefficient of $\alpha ^{2},$ in the first equation, and for at least one nonzero value of $\beta .$
\end{proof}

Every two-dimensional real nonassociative algebra satisfying the conditions of the Lemmas \ref{l53} or \ref{l54} admits the following multiplication tables:
{\allowdisplaybreaks\begin{eqnarray*}\allowdisplaybreaks
\begin{tabular}{c|cc}
\multicolumn{3}{c}{Table III} \\
   $\cdot $ & $e_{1}$ & $e_{2}$ \\ \hline
  $e_{1}$ & $e_{1}$ & $a_{12}e_{1}+b_{12}e_{2}$ \\
  $e_{2}$ & $a_{21}e_{1}+b_{21}e_{2}$ & $-e_{1}$
\end{tabular}\,\, \textrm{or}
\end{eqnarray*}}
or
{\allowdisplaybreaks\begin{eqnarray*}\allowdisplaybreaks
\begin{tabular}{c|cc}
\multicolumn{3}{c}{Table IV} \\
   $\cdot $ & $e_{1}$ & $e_{2}$ \\ \hline
  $e_{1}$ & $e_{1}$ & $a_{12}e_{1}+b_{12}e_{2}$ \\
  $e_{2}$ & $a_{21}e_{1}+b_{21}e_{2}$ & $e_{1}$
\end{tabular}.
\end{eqnarray*}}

\subsection{Algebras that have a unique idempotent and a solvable subalgebra generated by one element}
\begin{lemma}\label{l55} If $u$ is an idempotent and $v$ a nonzero element such that the subalgebra generated by $v$ is solvable, then $\{u,v\}$ is a basis of $\mathds{R}^{2}.$
\end{lemma}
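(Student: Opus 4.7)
The plan is to reduce everything to a linear-independence argument using Lemma \ref{l51}.

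First I would observe that since $\dim \mathds{R}^{2}=2$, it suffices to show that $\{u,v\}$ is linearly independent; the spanning condition then follows automatically. So the whole task is to rule out a nontrivial real relation $au+bv=0$.

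Next I would invoke Lemma \ref{l51}: since the subalgebra generated by $v$ is solvable and $v$ is nonzero, we must have $v^{2}=0$. This is the single structural fact I need about $v$, besides $v\neq 0$. About $u$ I use only that it is a nonzero idempotent (so $u^{2}=u\neq 0$).

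Then I would argue by contradiction. Suppose $au+bv=0$ with $(a,b)\neq(0,0)$. If $a=0$, then $bv=0$ with $b\neq 0$ forces $v=0$, contradicting the hypothesis. If $a\neq 0$, I write $u=\lambda v$ with $\lambda=-b/a$; squaring gives $u=u^{2}=\lambda^{2}v^{2}=0$, contradicting $u\neq 0$. Hence $\{u,v\}$ is linearly independent, and being of size $2$ in a $2$-dimensional space, it is a basis.

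I do not anticipate any obstacle: the only subtle point is realizing that Lemma \ref{l51} immediately supplies $v^{2}=0$, after which the argument is a one-line linear-algebra contradiction. No nonassociativity or norm considerations enter.
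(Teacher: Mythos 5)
Your proof is correct and follows essentially the same route as the paper's: both reduce the claim to linear independence, invoke Lemma \ref{l51} to get $v^{2}=0$, and square the linear relation to force $a=b=0$ using $u^{2}=u\neq 0$. The only cosmetic difference is that you split into the cases $a=0$ and $a\neq 0$, whereas the paper squares $au=-bv$ directly to obtain $a^{2}u=0$.
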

\begin{proof} Let $a,b$ be real numbers such that $au+bv=0.$ Then $au=-bv$ which implies $a^{2}u=0,$ by Lemma \ref{l51}. This results $a=0$ which yields $b=0.$
\end{proof}

Every two-dimensional real nonassociative algebra satisfying the condition of the Lemma \ref{l55} admits a multiplication table of type
{\allowdisplaybreaks\begin{eqnarray*}\allowdisplaybreaks
\begin{tabular}{c|cc}
\multicolumn{3}{c}{Table V} \\
   $\cdot $ & $e_{1}$ & $e_{2}$ \\ \hline
  $e_{1}$ & $e_{1}$ & $a_{12}e_{1}+b_{12}e_{2}$ \\
  $e_{2}$ & $a_{21}e_{1}+b_{21}e_{2}$ & $0$
\end{tabular}.
\end{eqnarray*}}

\subsection{Algebras that have two different idempotents}
\begin{lemma}\label{l56} If $u$ and $v$ are two different idempotents, then $\{u,v\}$ is a basis of $\mathds{R}^{2}.$
\end{lemma}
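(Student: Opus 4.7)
The plan is to show that $\{u,v\}$ is linearly independent; since $\dim \mathbb{R}^2 = 2$, linear independence alone will give us a basis. So I would suppose that there exist real numbers $a,b$, not both zero, with $au + bv = 0$, and derive a contradiction with either the hypothesis $u \neq v$ or the (implicit) hypothesis that idempotents are nonzero.

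The key move is to exploit idempotency by squaring the dependence relation, or rather solving for one idempotent in terms of the other and then applying the square. Assume without loss of generality that $a \neq 0$ (the case $b \neq 0$ is symmetric). Then $u = -(b/a)\,v$, and squaring both sides (using bilinearity of the product and $v^2 = v$) gives $u = u^2 = (b/a)^2 v$. Combining the two expressions for $u$ yields $((b/a)^2 + (b/a))\,v = 0$.

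Since $v$ is an idempotent, it is nonzero (idempotents are assumed nonzero in Section 4), so the scalar coefficient must vanish: $(b/a)((b/a)+1)=0$. This leaves two cases. If $b/a = 0$ then $b = 0$ and the original relation reduces to $au = 0$ with $a \neq 0$, forcing $u = 0$, contradicting that $u$ is an idempotent. If $b/a = -1$ then $b = -a$, and the relation becomes $a(u - v) = 0$ with $a \neq 0$, forcing $u = v$, contradicting the hypothesis that $u$ and $v$ are different idempotents.

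No step looks genuinely difficult here; the whole argument is a two-line linear-algebra manipulation once one notices the trick of squaring the dependence relation. The only mild subtlety to keep in mind is that the algebra is nonassociative, but since we only square a scalar multiple of a single element $v$ and use distributivity plus property (ii) of Section 2 (scalars commute freely with the product), no associativity is actually invoked.
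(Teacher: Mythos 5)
Your argument is correct and is essentially the paper's own proof: both square the dependence relation $au=-bv$ using idempotency to get a scalar identity equivalent to $(a+b)b=0$, and then rule out each factor using the nonvanishing and distinctness of the idempotents. The only cosmetic difference is your normalization by $a\neq 0$ versus the paper's multiplication of the relation by $a$ and subtraction.
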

\begin{proof} Let $a,b$ be real numbers such that $au+bv=0.$ Then $au=-bv$ which implies $a^{2}u=b^{2}v.$ Multiplying the penultimate identity by $a$ and subtracting it from the last we obtain $(a+b)b=0.$ This allows us to easily conclude that $a=b=0.$
\end{proof}

Every two-dimensional real nonassociative algebra satisfying the condition of the Lemma \ref{l56} admits a multiplication table of type
{\allowdisplaybreaks\begin{eqnarray*}\allowdisplaybreaks
\begin{tabular}{c|cc}
\multicolumn{3}{c}{Table VI} \\
   $\cdot $ & $e_{1}$ & $e_{2}$ \\ \hline
  $e_{1}$ & $e_{1}$ & $a_{12}e_{1}+b_{12}e_{2}$ \\
  $e_{2}$ & $a_{21}e_{1}+b_{21}e_{2}$ & $e_{2}$
\end{tabular}.
\end{eqnarray*}}

We end this part by observing that the complex number algebra $\mathds{C},$ the perplex number algebra $\mathds{P}$ and the dual number algebra $\mathds{D}$ are, up to isomorphism, two-dimensional real nonassociative algebras admitting the tables of type III, IV and V, respectively, all of them satisfying $A=0$ and $B=2.$

\begin{theorem}\label{thm51} Let $\mathds{R}^{2}$ be a real nonassociative algebra whose multiplication table is given by Table III. If $B\neq 0,$ then:
\begin{enumerate}
\item[(i)] for any element $u$ of $\mathds{R}^{2},$ $u^{2}=0$ if and only if $u=0;$
\item[(ii)] the norm $\|\cdot \|$ of $\mathds{R}^{2}$ satisfies the square inequality.
 \end{enumerate}
\end{theorem}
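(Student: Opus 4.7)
The plan is to compute $u^{2}$ in coordinates relative to the basis $\{e_{1},e_{2}\}$, derive part (i) by a direct case analysis that uses the hypothesis $B\neq 0$, and then obtain part (ii) from (i) via a standard compactness argument on the unit sphere of $\mathds{R}^{2}$.

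First I would write an arbitrary element as $u=ae_{1}+be_{2}$ with $a,b\in\mathds{R}$, and expand $u^{2}$ using Table III together with the definitions $A=a_{12}+a_{21}$ and $B=b_{12}+b_{21}$. The cross terms $ab\,e_{1}e_{2}$ and $ab\,e_{2}e_{1}$ combine to give $Aab\,e_{1}+Bab\,e_{2}$, while the diagonal terms give $a^{2}e_{1}-b^{2}e_{1}$. Thus
\begin{equation*}
u^{2}=(a^{2}+Aab-b^{2})\,e_{1}+Bab\,e_{2},
\end{equation*}
and, since $\{e_{1},e_{2}\}$ is orthonormal for the $g$-induced norm,
\begin{equation*}
\|u^{2}\|^{2}=(a^{2}+Aab-b^{2})^{2}+B^{2}a^{2}b^{2},\qquad \|u\|^{2}=a^{2}+b^{2}.
\end{equation*}

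For part (i), the identity $u^{2}=0$ forces $Bab=0$ and $a^{2}+Aab-b^{2}=0$. Since $B\neq 0$, the first equation gives $a=0$ or $b=0$; substituting either into the second equation yields $a=b=0$, hence $u=0$. The converse is immediate.

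For part (ii), I would use homogeneity: the ratio $\|u^{2}\|^{2}/\|u\|^{4}$ depends only on the direction of $u$, so it suffices to minimize the continuous function $\varphi(u)=\|u^{2}\|^{2}$ over the unit sphere $\Sigma=\{u\in\mathds{R}^{2}:\|u\|=1\}$. The sphere $\Sigma$ is compact, and by part (i) $\varphi$ is strictly positive on $\Sigma$, so it attains a positive minimum $\eta_{0}>0$. Setting $\eta=\sqrt{\eta_{0}}$ yields $\|u^{2}\|\geq\eta\|u\|^{2}$ for every $u\in\mathds{R}^{2}$, which is the square inequality.

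The only subtlety is ensuring that the argument for (ii) is genuinely independent of the angular parameter; a convenient alternative, should an explicit bound be desired, is to parametrize $\Sigma$ by $(\cos\theta,\sin\theta)$ and rewrite $\varphi$ as $(\cos 2\theta+\tfrac{A}{2}\sin 2\theta)^{2}+\tfrac{B^{2}}{4}\sin^{2}2\theta$, which vanishes only when simultaneously $\sin 2\theta=0$ and $\cos 2\theta=0$, an impossibility. But the compactness route is cleaner, and the only real step requiring care is the reduction (i), where $B\neq 0$ is essential.
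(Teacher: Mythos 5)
Your proof is correct. Part (i) coincides with the paper's argument: expand $u^{2}=(a^{2}+Aab-b^{2})e_{1}+Bab\,e_{2}$, use $B\neq 0$ to force $ab=0$, and conclude $a=b=0$ in either case. For part (ii) you take a genuinely different route. The paper splits into the two cases $b\neq 0$ and $a\neq 0$, substitutes $t=a/b$ (respectively $t=b/a$), and shows that the explicit rational function $h(t)=\frac{(t^{2}+1)^{2}}{(t^{2}+At-1)^{2}+B^{2}t^{2}}$ is bounded because it is continuous on $\mathds{R}$ and tends to $1$ at $\pm\infty$, taking $\eta=\min\{\eta_{1},\eta_{2}\}$ over the two charts. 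You instead exploit the degree-$2$ homogeneity of $u\mapsto u^{2}$ and minimize $\|u^{2}\|$ over the compact unit circle, where it is strictly positive by part (i). Both are at bottom compactness arguments on the space of directions (the paper compactifies each affine chart of the line of directions via the limits at infinity; you use the circle directly), but yours buys three things: no case split, an explicit logical dependence of (ii) on (i) (the paper's assertion that $h$ is ``defined and continuous on $\mathds{R}$'' is precisely the statement that the denominator never vanishes, which is part (i) restricted to a chart, left implicit there), and immediate generalization to any finite-dimensional algebra with no nonzero square-zero elements. The paper's version, in exchange, exhibits the constant as the bound of a concrete one-variable function, which is convenient if one wants to estimate $\eta$ numerically for specific $A$ and $B$; your parenthetical trigonometric parametrization recovers that explicitness if needed.
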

\begin{proof} (i) Let $u=ae_{1}+be_{2}$ be any element of $\mathds{R}^{2},$ where $a,b$ are real numbers. Then
{\allowdisplaybreaks\begin{eqnarray*}\allowdisplaybreaks
u^{2}&=&(ae_{1}+be_{2})(ae_{1}+be_{2})\\
&=&(a^{2}+(a_{12}+a_{21})ab-b^{2})e_{1}+(b_{12}+b_{21})abe_{2}\\
&=&(a^{2}+Aab-b^{2})e_{1}+Babe_{2}.
\end{eqnarray*}}
If $u^{2}=0,$ then $a^{2}+Aab-b^{2}=0$ and $Bab=0$ which implies $a=0$ or $b=0.$ In both cases, we easily conclude that $a=0$ and $b=0.$ Therefore $u^{2}=0$ implies $u=0.$\\
\noindent (ii) Let $u=ae_{1}+be_{2}$ be any nonzero element of $\mathds{R}^{2}.$ Two cases are considered. First case: $b\neq 0.$ Let $t$ be the real number such that $a=t b.$ Then
{\allowdisplaybreaks\begin{eqnarray*}\allowdisplaybreaks
\|u^{2}\|^{2}&=&(a^{2}+Aab-b^{2})^{2}+(Bab)^{2}\\
&=&\big((t ^{2}+At -1)^{2}+B^{2}t ^{2}\big)b^{4}
\end{eqnarray*}}
and
{\allowdisplaybreaks\begin{eqnarray*}\allowdisplaybreaks
\big(\|u\|^{2}\big)^{2}=(a^{2}+b^{2})^{2}=(t ^{2}+1)^{2}b^{4}.
\end{eqnarray*}}
The real function of one variable 
{\allowdisplaybreaks\begin{eqnarray*}\allowdisplaybreaks
 h(t)=\frac{(t^{2}+1)^{2}}{(t^{2}+At -1)^{2}+B^{2}t^{2}}
\end{eqnarray*}}
is defined and it is continuous on $\mathds{R}$ and $\displaystyle \lim _{t\rightarrow \pm \infty } h(t)=1.$ It follows that $h$ is a bounded function in $\mathds{R}.$ Thus, there is a positive real number $\eta _{1} $ such that $h(t)\leq \eta _{1}^{-2} ,$ for all real numbers $t.$ It follows that,
{\allowdisplaybreaks\begin{eqnarray*}\allowdisplaybreaks
&&\eta _{1} ^{-2}\|u^{2}\|^{2}-(\|u\|^{2})^{2}\\
&=&\eta _{1} ^{-2}\big((t ^{2}+At -1)^{2}+B^{2}t ^{2}\big)b^{4}-(t ^{2}+1)^{2}b^{4}\\
&=&\big(\eta _{1} ^{-2}\big((t ^{2}+At -1)^{2}+B^{2}t ^{2}\big)-(t ^{2}+1)^{2}\big)b^{4}\geq 0.
\end{eqnarray*}}
Therefore, $\|u^{2}\|\geq \eta _{1} \|u\|^{2}.$ Second case: $a\neq 0.$ From a similar reasoning used in the first case, we conclude that there is a positive real number $\eta _{2}$ such that $\|u^{2}\|\geq \eta _{2}\|u\|^{2}.$ Note that the values of $\eta _{1}$ and $\eta _{2}$ do not depend on the considered element $u.$ Taking $\eta =\min \{\eta _{1},\eta _{2}\},$ then we can conclude that $\|u^{2}\|\geq \eta \|u\|^{2},$ for all elements $u$ of $\mathds{R}^{2}.$
\end{proof}

The following example shows that it is not possible to characterize the orbits of the mapping $f_{c},$ according the Theorem \ref{thm31}, for a real nonassociative algebra whose multiplication table is given by Table III.

\begin{example}\label{e51} Let $\mathds{R}^{2}$ be a real nonassociative algebra whose multiplication table is given by Table III, where $B=0.$ Let us consider either the elements $\displaystyle c=\break \Big(\frac{-Aa+\sqrt{(A^{2}+4)a^{2}}}{2}\Big)e_{1}+ae_{2}$ and $\displaystyle u=\Big(\frac{-Ab+\sqrt{(A^{2}+4)b^{2}}}{2}\Big)e_{1}+be_{2}$ or $\displaystyle c=\break \Big(\frac{-Aa-\sqrt{(A^{2}+4)a^{2}}}{2}\Big)e_{1}+ae_{2}$ and $\displaystyle u=\Big(\frac{-Ab-\sqrt{(A^{2}+4)b^{2}}}{2}\Big)e_{1}+be_{2},$ where $a,b$ are any real numbers. We compute $u^{2}=0$ which implies that $f^{n}_{c}(u)=c,$ for every positive integer $n.$
\end{example}

\begin{theorem}\label{thm52} Let $\mathds{R}^{2}$ be a real nonassociative algebra whose multiplication table is given by Table IV. If $|A|<2$ or $B\neq 0,$ then:
\begin{enumerate}
\item[(i)] for any element $u$ of $\mathds{R}^{2},$ $u^{2}=0$ if and only if $u=0;$
\item[(ii)] the norm $\|\cdot \|$ of $\mathds{R}^{2}$ satisfies the square inequality.
 \end{enumerate}
\end{theorem}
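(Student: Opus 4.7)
The plan is to mimic the proof of Theorem~\ref{thm51} with the single modification that the sign of the $b^{2}$-coefficient in the $e_{1}$-component of $u^{2}$ changes. Using Table~IV, for $u=ae_{1}+be_{2}$ one obtains
\[
u^{2}=(a^{2}+Aab+b^{2})e_{1}+Bab\,e_{2},
\]
so $\|u^{2}\|^{2}=(a^{2}+Aab+b^{2})^{2}+(Bab)^{2}$ and $(\|u\|^{2})^{2}=(a^{2}+b^{2})^{2}$. The whole proof reduces to analysing these two quantities under the disjunctive hypothesis $|A|<2$ or $B\neq 0$.

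For (i) I would suppose $u^{2}=0$, so $a^{2}+Aab+b^{2}=0$ and $Bab=0$, and then handle the two possibilities in the hypothesis separately. If $B\neq 0$, then $ab=0$, whence one of $a,b$ vanishes, and substitution in the remaining equation forces the other to vanish as well. If $|A|<2$, the binary quadratic form $a^{2}+Aab+b^{2}$ is positive definite (its discriminant $A^{2}-4$ is negative), so it vanishes only at $a=b=0$. Either way $u=0$; the converse is obvious.

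For (ii) I would follow the two-case template of Theorem~\ref{thm51}. In the case $b\neq 0$, writing $a=tb$ expresses the ratio $(\|u\|^{2})^{2}/\|u^{2}\|^{2}$ as
\[
h(t)=\frac{(t^{2}+1)^{2}}{(t^{2}+At+1)^{2}+B^{2}t^{2}},
\]
and the case $a\neq 0$ will be handled analogously, exploiting the symmetry of the quadratic form $a^{2}+Aab+b^{2}$ under $a\leftrightarrow b$. The principal obstacle is verifying that the denominator of $h$ has no real zero, so that $h$ is continuous on $\mathds{R}$; I expect the disjunctive hypothesis to do exactly this. Indeed, if $B\neq 0$ a zero would force $Bt=0$, hence $t=0$, contradicting the first summand equalling $1$; while if $|A|<2$, the polynomial $t^{2}+At+1$ has negative discriminant, so $(t^{2}+At+1)^{2}>0$ everywhere. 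Because in addition $\lim_{t\to\pm\infty}h(t)=1$, the function $h$ is bounded on $\mathds{R}$ by some $\eta_{1}^{-2}$, giving $\|u^{2}\|\geq\eta_{1}\|u\|^{2}$ whenever $b\neq 0$. A similar constant $\eta_{2}$ arises from the case $a\neq 0$, and setting $\eta=\min\{\eta_{1},\eta_{2}\}$ yields the desired square inequality for every $u\in\mathds{R}^{2}$.
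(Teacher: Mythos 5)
Your proposal is correct and follows essentially the same route as the paper's own proof: the same case split on $|A|<2$ versus $B\neq 0$ (via the positive-definiteness of $a^{2}+Aab+b^{2}$) for (i), and the same substitution $a=tb$ reducing (ii) to the boundedness of $h(t)=(t^{2}+1)^{2}/\big((t^{2}+At+1)^{2}+B^{2}t^{2}\big).$ If anything you are slightly more careful than the paper, which simply asserts that $h$ is defined and continuous on $\mathds{R},$ whereas you explicitly verify that the denominator never vanishes under the disjunctive hypothesis.
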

\begin{proof} Let $u=ae_{1}+be_{2}$ be any element of $\mathds{R}^{2},$ where $a,b$ are real numbers. Then
{\allowdisplaybreaks\begin{eqnarray*}\allowdisplaybreaks
u^{2}&=&(ae_{1}+be_{2})(ae_{1}+be_{2})\\
&=&(a^{2}+(a_{12}+a_{21})ab+b^{2})e_{1}+(b_{12}+b_{21})abe_{2}\\
&=&(a^{2}+Aab+b^{2})e_{1}+Babe_{2}.
\end{eqnarray*}}
If $u^{2}=0,$ then $a^{2}+Aab+b^{2}=0$ and $Bab=0.$ Two cases are considered. First case: $|A|<2.$ Then $a^{2}+Aab+b^{2}=0$ implies $\displaystyle \big(a+\frac{A}{2}b\big)^{2}=\big(\frac{A^{2}-4}{4}\big)b^{2}.$ This shows that $b^{2}=0$ which yields $a=0$ and $b=0.$ Second case: $B\neq 0.$ Then $a=0$ or $b=0$ which leads to $a=0$ and $b=0.$ This shows that $u^{2}=0$ implies $u=0.$\\
\noindent (ii) Let $u=ae_{1}+be_{2}$ be any nonzero element of $\mathds{R}^{2}.$ Two cases are considered. First case: $b\neq 0.$ Let $t$ be a real number such that $a=tb.$ Then
{\allowdisplaybreaks\begin{eqnarray*}\allowdisplaybreaks
\|u^{2}\|^{2}&=&(a^{2}+Aab+b^{2})^{2}+(Bab)^{2}\\
&=&\big((t ^{2}+At +1)^{2}+B^{2}t ^{2}\big)b^{4}
\end{eqnarray*}}
and
{\allowdisplaybreaks\begin{eqnarray*}\allowdisplaybreaks
\big(\|u\|^{2}\big)^{2}=(a^{2}+b^{2})^{2}=(t ^{2}+1)^{2}b^{4}.
\end{eqnarray*}}
The real function of one variable 
{\allowdisplaybreaks\begin{eqnarray*}\allowdisplaybreaks
h(t)=\frac{(t^{2}+1)^{2}}{(t^{2}+At +1)^{2}+B^{2}t^{2}}
\end{eqnarray*}}
is defined and it is continuous on $\mathds{R}$ and $\displaystyle \lim _{t\rightarrow \pm \infty } h(t)=1.$ It follows that $h$ is a bounded function in $\mathds{R}.$ Thus, there is a positive real number $\eta _{1} $ such that $h(t)\leq \eta _{1}^{-2} ,$ for all real numbers $t.$ Thus,
{\allowdisplaybreaks\begin{eqnarray*}\allowdisplaybreaks
&&\eta _{1} ^{-2}\|u^{2}\|^{2}-(\|u\|^{2})^{2}\\
&=&\eta _{1} ^{-2}\big((t ^{2}+At +1)^{2}+B^{2}t ^{2}\big)b^{4}-(t ^{2}+1)^{2}b^{4}\\
&=&\big(\eta _{1} ^{-2}\big((t ^{2}+At -1)^{2}+B^{2}t ^{2}\big)-(t ^{2}+1)^{2}\big)b^{4}\geq 0.
\end{eqnarray*}}
Therefore, $\|u^{2}\|\geq \eta _{1}\|u\|^{2}.$ Second case: $a\neq 0.$ From a similar reasoning used in the previous case, we conclude that there is a positive real number $\eta _{2}$ such that $\|u^{2}\|\geq \eta _{2}\|u\|^{2}.$ Note that the values of $\eta _{1}$ and $\eta _{2}$ do not depend on the considered element $u.$ Taking $\eta =\min \{\eta _{1},\eta _{2}\},$ then we obtain $\|u^{2}\|\geq \eta \|u\|^{2},$ for all elements $u$ of $\mathds{R}^{2}.$
\end{proof}

The example below presents a real nonassociative algebra whose multiplication table is given by Table IV, where it is not possible to characterize the orbits of the mapping $f_{c},$ according the Theorem \ref{thm31}.

\begin{example}\label{e52} Let $\mathds{R}^{2}$ be a real nonassociative algebra whose multiplication table is given by Table IV, where $|A|\geq 2$ and $B=0.$ Let us consider either the elements $\displaystyle c=\Big(\frac{-Aa+ \sqrt{(A^{2}-4)a^{2}}}{2}\Big)e_{1}+ae_{2}$ and $\displaystyle u=\Big(\frac{-Ab+ \sqrt{(A^{2}-4)b^{2}}}{2}\Big)e_{1}+be_{2}$ or $\displaystyle c=\Big(\frac{-Aa- \sqrt{(A^{2}-4)a^{2}}}{2}\Big)e_{1}+ae_{2}$ and $\displaystyle u=\Big(\frac{-Ab- \sqrt{(A^{2}-4)b^{2}}}{2}\Big)e_{1}+be_{2},$ where $a,b$ are any real numbers. We compute $u^{2}=0$ which implies that $f^{n}_{c}(u)=c,$ for every positive integer $n.$
\end{example}

The following example presents a real nonassociative algebra whose multiplication table is given by Table IV where there are empty filled Julia sets.

\begin{example}\label{e53} Let $\mathds{R}^{2}$ be a real nonassociative algebra whose multiplication table is given by Table IV, where $|B|\geq |A|=0,$ and an element $c=\alpha e_{1}+\beta e_{2}$ such that $\alpha >1.$ Then for any element $u=ae_{1}+be_{2}$ of $\mathds{R}^{2},$ where $a,b$ are real numbers, we have
{\allowdisplaybreaks\begin{eqnarray*}\allowdisplaybreaks
f_{c}(u)&=&u^{2}+c\\
&=&(ae_{1}+be_{2})(ae_{1}+be_{2})+(\alpha e_{1}+\beta e_{2})\\
&=&(a^{2}+Aab+b^{2}+\alpha )e_{1}+(Bab+\beta )e_{2}\\
&=&(a^{2}+b^{2}+\alpha )e_{1}+(Bab+\beta )e_{2}\\
&=&(\|u\|^{2}+\alpha )e_{1}+(Bab+\beta )e_{2}
\end{eqnarray*}}
which results $\|f_{c}(u)\|\geq \|u\|^{2}+\alpha .$ Assume that for some positive integer $l$ holds $\|f^{l}_{c}(u)\|\geq (\|u\|^{2}+\alpha )^{l},$ for all elements $u$ of $\mathds{R}^{2}.$ Then for the integer $l+1$ we have
{\allowdisplaybreaks\begin{eqnarray*}\allowdisplaybreaks
\|f^{l+1}_{c}(u)\|&=&\|f_{c}(f^{l}_{c}(u))\|\geq \|f^{l}_{c}(u)\|^{2}+\alpha \\
&\geq & \big((\|u\|^{2}+\alpha )^{l}\big)^{2}+\alpha \geq (\|u\|^{2}+\alpha )^{l+1},
\end{eqnarray*}}
for all elements $u$ of $\mathds{R}^{2}.$ Thus, by the principle of mathematical induction we prove that $\|f^{n}_{c}(u)\|\geq (\|u\|^{2}+\alpha )^{n},$ for all elements $u$ of $\mathds{R}^{2}$ and every positive integer $n.$ This allows us to conclude that the orbit of any element $u$ of $\mathds{R}^{2}$ is unbounded. This shows that $\mathcal{K}_{\mathds{R}^{2}}(f_{c})=\emptyset .$
\end{example}

The theorem and example that follow are related to the class of algebras whose multiplication table is given by Table II, where $b_{22}\neq 0,$ but they do not satisfy the conditions of the Lemma \ref{l54}.

\begin{theorem}\label{thm53} Let $\mathds{R}^{2}$ be a real nonassociative algebra whose multiplication table is given by Table II, where $b_{22}\neq 0.$ If $B=0,$ then:
\begin{enumerate}
\item[(i)] for any element $u$ of $\mathds{R}^{2},$ $u^{2}=0$ if and only if $u=0;$
\item[(ii)] the norm $\|\cdot \|$ of $\mathds{R}^{2}$ satisfies the square inequality.
 \end{enumerate}
\end{theorem}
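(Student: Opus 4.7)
My plan is to mirror the strategy of Theorems \ref{thm51} and \ref{thm52}, adapted to the more general Table II coefficients $a_{22}, b_{22}$. For $u = ae_1 + be_2$, direct expansion using Table II yields
\[
u^2 = (a^2 + Aab + a_{22}b^2)e_1 + (Bab + b_{22}b^2)e_2,
\]
which under $B = 0$ collapses to $u^2 = (a^2 + Aab + a_{22}b^2)e_1 + b_{22}b^2\, e_2$. For part (i), if $u^2 = 0$ then the $e_2$-component forces $b_{22}b^2 = 0$; since $b_{22} \neq 0$ this yields $b = 0$, and the $e_1$-component then reduces to $a^2 = 0$, so $a = 0$. (Conversely, $u=0$ trivially gives $u^2=0$.)

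For part (ii) I would follow the same two-case reduction used in the previous theorems. In the case $b \neq 0$, set $t = a/b$; the square inequality $\|u^2\| \geq \eta \|u\|^2$ reduces to the boundedness on $\mathds{R}$ of the quotient
\[
h(t) = \frac{(t^2+1)^2}{(t^2 + At + a_{22})^2 + b_{22}^2}.
\]
The crucial observation is that $b_{22}^2 > 0$ makes the denominator strictly positive on all of $\mathds{R}$, so $h$ is continuous there; since both numerator and denominator are degree-$4$ monic polynomials in $t$, one has $h(t) \to 1$ as $t \to \pm\infty$, and the standard argument (continuous function on $\mathds{R}$ with finite limits at $\pm\infty$ is bounded) supplies some $\eta_1 > 0$ with $h(t) \leq \eta_1^{-2}$ for all $t$, i.e., $\|u^2\| \geq \eta_1 \|u\|^2$. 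In the residual case $b = 0$ with $a \neq 0$, direct calculation gives $u^2 = a^2 e_1$, so $\|u^2\| = \|u\|^2$ and $\eta_2 = 1$ suffices. Taking $\eta = \min\{\eta_1,\eta_2\}$, independent of $u$, completes the proof.

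The only genuine departure from Theorems \ref{thm51} and \ref{thm52} is the mechanism ensuring positivity of the denominator of $h$: here it comes straight from the hypothesis $b_{22} \neq 0$ (the constant $b_{22}^2$ is unweighted rather than multiplied by $t^2$ as in the earlier theorems), so there is no need for a discriminant analysis of the shifted quadratic $t^2 + At + a_{22}$ and no restriction on $A$ is required. Everything else is a routine parallel to the prior arguments, and I expect no substantive obstacle.
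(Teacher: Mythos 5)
Your proof is correct and follows essentially the same route as the paper's: the same expansion of $u^{2}$, the same reduction to the rational function $h(t)=\frac{(t^{2}+1)^{2}}{(t^{2}+At+a_{22})^{2}+b_{22}^{2}}$ and the same continuity-plus-limits boundedness argument in the case $b\neq 0$. The only (harmless, indeed slightly cleaner) deviation is that you dispatch the residual case $b=0,$ $a\neq 0$ directly via $u^{2}=a^{2}e_{1}$ and $\eta_{2}=1,$ whereas the paper repeats the $h$-function argument symmetrically for $a\neq 0.$
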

\begin{proof} (i) Let $u=ae_{1}+be_{2}$ be an element of $\mathds{R}^{2},$ where $a,b$ are real numbers. Then
{\allowdisplaybreaks\begin{eqnarray*}\allowdisplaybreaks
u^{2}&=&(ae_{1}+be_{2})(ae_{1}+be_{2})\\
&=&(a^{2}+(a_{12}+a_{21})ab+a_{22}b^{2})e_{1}\\
&&+((b_{12}+b_{21})ab+b_{22}b^{2})e_{2}\\
&=&(a^{2}+Aab+a_{22}b^{2})e_{1}+(Bab+b_{22}b^{2})e_{2}\\
&=&(a^{2}+Aab+a_{22}b^{2})e_{1}+(b_{22}b^{2})e_{2}.
\end{eqnarray*}}
If $u^{2}=0,$ then $a^{2}+Aab+a_{22}b^{2}=0$ and $b_{22}b^{2}=0.$ It follows that, $b^{2}=0$ which results $a=0$ and $b=0.$ This shows that $u^{2}=0$ implies $u=0.$\\
\noindent (ii) Let $u=ae_{1}+be_{2}$ be a nonzero element of $\mathds{R}^{2},$ where $a,b$ are real numbers. Two cases are considered. First case: $b\neq 0.$ Let $t$ be the real number such that $a=tb.$ Then
{\allowdisplaybreaks\begin{eqnarray*}\allowdisplaybreaks
\|u^{2}\|^{2}&=&(a^{2}+Aab+a_{22}b^{2})^{2}+(b_{22}b^{2})^{2}.\\
&=&(t ^{2}+At +a_{22})^{2}b^{4}+b_{22}^{2}b^{4}\\
&=&\big((t ^{2}+At +a_{22})^{2}+b_{22}^{2}\big)b^{4}
\end{eqnarray*}}
and
{\allowdisplaybreaks\begin{eqnarray*}\allowdisplaybreaks
(\|u\|^{2})^{2}=\big(a^{2}+b^{2}\big)^{2}=(t ^{2}+1)^{2}b^{4}.
\end{eqnarray*}}
The real function of one variable 
{\allowdisplaybreaks\begin{eqnarray*}\allowdisplaybreaks
h(t)=\frac{(t^{2}+1)^{2}}{(t^{2}+At+a_{22})^{2}+b_{22}^{2}}
\end{eqnarray*}}
is defined and it is continuous on $\mathds{R}$ and $\displaystyle \lim _{t\rightarrow \pm \infty } h(t)=1.$ This shows that $h$ is a bounded function in $\mathds{R}.$ Thus, there is a positive real number $\eta _{1} $ such that $h(t)\leq \eta _{1}^{-2} ,$ for all real numbers $t.$ Thus,
{\allowdisplaybreaks\begin{eqnarray*}\allowdisplaybreaks
&&\eta _{1} ^{-2}\|u^{2}\|^{2}-(\|u\|^{2})^{2}\\
&=&\eta _{1} ^{-2}\big((t ^{2}+At +a_{22})^{2}+b_{22}^{2}\big)b^{4}-(t ^{2}+1)^{2}b^{4}\\
&=&\big(\eta _{1} ^{-2}\big((t ^{2}+At +a_{22})^{2}+b_{22}^{2}\big)-(t ^{2}+1)^{2}\big)b^{4}\geq 0.
\end{eqnarray*}}
Therefore, $\|u^{2}\|\geq \eta _{1}\|u\|^{2}.$ Second case: $a\neq 0.$ From a similar reasoning used in the previous case, we conclude that there is a positive real number $\eta _{2} $ such that $\|u^{2}\|\geq \eta _{2}\|u\|^{2}.$ Note that the values of $\eta _{1}$ and $\eta _{2}$ do not depend on the considered element $u.$ Taking $\eta =\min \{\eta _{1}, \eta _{2}\},$ then we can conclude that $\|u^{2}\|\geq \eta \|u\|^{2},$ for all elements $u$ of $\mathds{R}^{2}.$
\end{proof}

\begin{example}\label{e54} Let $\mathds{R}^{2}$ be a real nonassociative algebra whose multiplication table is given by Table II, where $A=B=2,$ $\displaystyle a_{22}=\frac{3}{4}$ and $b_{22}=1.$ Then $1-AB/b_{22}+B^{2}a_{22}/b_{22}^{2}=0.$ Let us consider the elements $\displaystyle c=ae_{1}+(-2 a)e_{2}$ and $\displaystyle u=be_{1}+(-2b)e_{2},$ where $a,b$ are any real numbers. We compute $u^{2}=0$ which implies that $f^{n}_{c}(u)=c,$ for every positive integer $n.$
\end{example}

The following example shows that it is not possible to characterize the orbits of the mapping $f_{c},$ according the Theorem \ref{thm31}, for real nonassociative algebras whose multiplication table is given by Table V.

\begin{example}\label{e55} Let $\mathds{R}^{2}$ be a real nonassociative algebra whose multiplication table is given by Table V, where $A$ and $B$ are any real numbers. Let us consider the elements $c=0e_{1}+ae_{2}$ and $u=0e_{1}+be_{2},$ where $a,b$ are any real numbers. We compute $u^{2}=0$ which implies that $f^{n}_{c}(u)=c,$ for every positive integer $n.$
\end{example}

\begin{theorem}\label{thm54} Let $\mathds{R}^{2}$ be a real nonassociative algebra whose multiplication table is given by Table VI. If $AB\neq 1,$ then:
\begin{enumerate}
\item[(i)] for any element $u$ of $\mathds{R}^{2},$ $u^{2}=0$ if and only if $u=0;$
\item[(ii)] the norm $\|\cdot \|$ of $\mathds{R}^{2}$ satisfies the square inequality.
 \end{enumerate}
\end{theorem}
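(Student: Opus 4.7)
The plan is to follow the template established by Theorems \ref{thm51}, \ref{thm52} and \ref{thm53}. First I would expand $u^{2}$ for an arbitrary $u=ae_{1}+be_{2}$ using Table VI. Since $e_{1}^{2}=e_{1}$, $e_{2}^{2}=e_{2}$, and $e_{1}e_{2}+e_{2}e_{1}=Ae_{1}+Be_{2}$, this yields
\begin{eqnarray*}
u^{2}=(a^{2}+Aab)e_{1}+(b^{2}+Bab)e_{2}.
\end{eqnarray*}
For part (i), the equation $u^{2}=0$ becomes the pair $a(a+Ab)=0$ and $b(b+Ba)=0$. If $a=0$ the second equation gives $b^{2}=0$, and symmetrically if $b=0$; in either case $u=0$. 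Otherwise both $a,b\neq 0$, forcing $a=-Ab$ and $b=-Ba$; substituting the first into the second produces $b=ABb$, i.e.\ $(1-AB)b=0$, which contradicts $AB\neq 1$.

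For part (ii), I would split into the cases $b\neq 0$ and $a\neq 0$ as in the previous theorems. In the case $b\neq 0$, set $a=tb$; a direct computation gives
\begin{eqnarray*}
\|u^{2}\|^{2}=\big((t^{2}+At)^{2}+(1+Bt)^{2}\big)b^{4}, \qquad (\|u\|^{2})^{2}=(t^{2}+1)^{2}b^{4},
\end{eqnarray*}
so the argument reduces to showing that
\begin{eqnarray*}
h(t)=\frac{(t^{2}+1)^{2}}{(t^{2}+At)^{2}+(1+Bt)^{2}}
\end{eqnarray*}
is bounded on $\mathds{R}$. Since $\lim_{t\to\pm\infty}h(t)=1$, it suffices to know that $h$ is continuous, i.e.\ that the denominator has no real zero; then a uniform bound $h(t)\leq \eta_{1}^{-2}$ yields $\|u^{2}\|\geq \eta_{1}\|u\|^{2}$. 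The case $a\neq 0$ is symmetric: set $b=sa$ to obtain an analogous rational function, derive a constant $\eta_{2}$, and then take $\eta=\min\{\eta_{1},\eta_{2}\}$, which does not depend on $u$.

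The one nontrivial step, and where the hypothesis $AB\neq 1$ enters decisively, is the non-vanishing of the denominator. A real $t$ making $(t^{2}+At)^{2}+(1+Bt)^{2}=0$ must satisfy $t^{2}+At=0$ and $1+Bt=0$ simultaneously. If $B=0$ the second equation is inconsistent; if $B\neq 0$ it forces $t=-1/B$, and then the first equation becomes $1/B^{2}-A/B=0$, i.e.\ $AB=1$, contradicting the hypothesis. Therefore the denominator is strictly positive on $\mathds{R}$, $h$ is continuous with finite limits at $\pm\infty$, and boundedness follows. The only genuine obstacle is organising this non-vanishing argument symmetrically in both cases; everything else is exactly parallel to the proofs of Theorems \ref{thm51}--\ref{thm53}.
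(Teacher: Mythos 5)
Your proposal is correct and follows essentially the same route as the paper: the same expansion of $u^{2}$, the same case split on $a=0$, $b=0$, or $a=-Ab$ for part (i), and the same reduction to the boundedness of $h(t)=(t^{2}+1)^{2}/\big((t^{2}+At)^{2}+(Bt+1)^{2}\big)$ via $a=tb$ for part (ii). The only difference is that you explicitly verify that the denominator has no real zero (which is where $AB\neq 1$ enters), whereas the paper simply asserts that $h$ is defined and continuous; your extra step is exactly the content of part (i) restricted to $b\neq 0$, so the two arguments coincide.
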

\begin{proof} (i) Let $u=ae_{1}+be_{2}$ be an element of $\mathds{R}^{2},$ where $a,b$ are real numbers. Then
{\allowdisplaybreaks\begin{eqnarray*}\allowdisplaybreaks
u^{2}&=&(ae_{1}+be_{2})(ae_{1}+be_{2})\\
&=&(a^{2}+(a_{12}+a_{21})ab)e_{1}+((b_{12}+b_{21})ab+b^{2})e_{2}\\
&=&(a^{2}+Aab)e_{1}+(Bab+b^{2})e_{2}.
\end{eqnarray*}}
If $u^{2}=0,$ then $a^{2}+Aab=0$ and $Bab+b^{2}=0.$ It follows that if $a=0,$ then $b^{2}=0$ which results $b=0.$ Now, if $a+Ab=0,$ then $(-AB+1)b^{2}=0$ which yields that $b^{2}=0.$ This results $a=0$ and $b=0.$ This allows us to conclude that $u^{2}=0$ implies $u=0.$\\
\noindent (ii) Let $u=ae_{1}+be_{2}$ be a nonzero element of $\mathds{R}^{2},$ where $a,b$ are real numbers. Two cases are considered. First case: $b\neq 0.$ Let $t$ be the real number such that $a=tb.$ Then
{\allowdisplaybreaks\begin{eqnarray*}\allowdisplaybreaks
\|u^{2}\|^{2}&=&(a^{2}+Aab)^{2}+(Bab+b^{2})^{2}\\
&=&(t ^{2}+At )^{2}b^{4}+(Bt +1)^{2}b^{4}\\
&=&\big((t ^{2}+At )^{2}+(Bt +1)^{2}\big)b^{4}
\end{eqnarray*}}
and
{\allowdisplaybreaks\begin{eqnarray*}\allowdisplaybreaks
(\|u\|^{2})^{2}=\big(a^{2}+b^{2}\big)^{2}=(t ^{2}+1)^{2}b^{4}.
\end{eqnarray*}}
The real function of one variable 
{\allowdisplaybreaks\begin{eqnarray*}\allowdisplaybreaks
h(t)=\frac{(t^{2}+1)^{2}}{(t^{2}+At)^{2}+(Bt+1)^{2}}
\end{eqnarray*}}
is defined and it is continuous on $\mathds{R}$ and $\displaystyle \lim _{t\rightarrow \pm \infty } h(t)=1.$ This shows that $h$ is a bounded function in $\mathds{R}.$ Thus, there is a positive real number $\eta _{1} $ such that $h(t)\leq \eta _{1}^{-2},$ for all real numbers $t.$ It follows that,
{\allowdisplaybreaks\begin{eqnarray*}\allowdisplaybreaks
&&\eta _{1} ^{-2}\|u^{2}\|^{2}-(\|u\|^{2})^{2}\\
&=&\eta _{1} ^{-2}\big((t ^{2}+At )^{2}+(Bt +1)^{2}\big)b^{4}-(t ^{2}+1)^{2}b^{4}\\
&=&\big(\eta _{1} ^{-2}\big((t ^{2}+At )^{2}+(Bt +1)^{2}\big)-(t ^{2}+1)^{2}\big)b^{4}\geq 0.
\end{eqnarray*}}
Therefore $\|u^{2}\|\geq \eta _{1}\|u\|^{2}.$ Second case: $a\neq 0.$ From a similar reasoning used in the previous case, we conclude that there is a positive real number $\eta _{2} $ such that $\|u^{2}\|\geq \eta _{2}\|u\|^{2}.$ Note that the values of $\eta _{1}$ and $\eta _{2}$ do not depend on the considered element $u.$ Taking $\eta =\min \{\eta _{1}, \eta _{2}\},$ then we conclude that $\|u^{2}\|\geq \eta \|u\|^{2},$ for all elements $u$ of $\mathds{R}^{2}.$
\end{proof}

The example below presents a real nonassociative algebra whose multiplication table is given by Table VI where it is not possible to characterize the orbits of the mapping $f_{c},$ according the Theorem \ref{thm31}.

\begin{example} Let $\mathds{R}^{2}$ be a real nonassociative algebra whose multiplication table is given by Table VI, where $A$ and $B$ are any nonzero real numbers such that $AB=1.$ Let us consider the elements $c=ae_{1}-Bae_{2}$ and $u=be_{1}-Bbe_{2},$ where $a,b$ are any real numbers. We compute $u^{2}=0$ which implies that $f^{n}_{c}(u)=c,$ for every positive integer $n.$
\end{example}

The following example presents a real nonassociative algebra whose multiplication table is given by Table VI and in which there are empty filled Julia sets.

\begin{example} Let $\mathds{R}^{2}$ be a real nonassociative algebra whose multiplication table is given by Table VI, where $A=B=0,$ and an element $c=\alpha e_{1}+\beta e_{2}$ such that $\alpha >1$ and $\beta \geq \frac{1}{2}.$ Then for any element $u=ae_{1}+be_{2}$ of $\mathds{R}^{2},$ where $a,b$ are real numbers, we have
{\allowdisplaybreaks\begin{eqnarray*}\allowdisplaybreaks
\|f_{c}(u)\|&=&\|u^{2}+c\|=\|(ae_{1}+be_{2})(ae_{1}+be_{2})+(\alpha e_{1}+\beta e_{2})\|\\
&=&\|(a^{2}+\alpha )e_{1}+(b^{2}+\beta )e_{2}\|=\sqrt{(a^{2}+\alpha )^{2}+(b^{2}+\beta )^{2}}\\
&\geq &\sqrt{2a^{2}\alpha +2 b^{2}\beta+\alpha ^{2}+\beta ^{2}}\geq \sqrt{\|u\|^{2}+\|c\|^{2}}
\end{eqnarray*}}
which results $\|f_{c}(u)\|\geq \sqrt{\|u\|^{2}+\|c\|^{2}}.$ Assume that for some positive integer $l$ holds $\|f^{l}_{c}(u)\|\geq \sqrt{\|u\|^{2}+l\|c\|^{2}},$ for all elements $u$ of $\mathds{R}^{2}.$ Then for the integer $l+1$ we have
{\allowdisplaybreaks\begin{eqnarray*}\allowdisplaybreaks
\|f^{l+1}_{c}(u)\|&=&\|f_{c}(f^{l}_{c}(u))\|\geq \sqrt{\|f^{l}_{c}(u)\|^{2}+\|c\|^{2}}\\
&\geq & \sqrt{\|u\|^{2}+l\|c\|^{2}+\|c\|^{2}}=\sqrt{\|u\|^{2}+l\|c\|^{2}+\|c\|^{2}}\\
&= &\sqrt{\|u\|^{2}+(l+1)\|c\|^{2}}
\end{eqnarray*}}
for all elements $u$ of $\mathds{R}^{2}.$ Thus, by the principle of mathematical induction we prove that $\|f^{n}_{c}(u)\|\geq \sqrt{\|u\|^{2}+n\|c\|^{2}},$ for all elements $u$ of $\mathds{R}^{2}$ and every positive integer $n.$ This allows us to conclude that the orbit of any element $u$ of $\mathds{R}^{2}$ is unbounded. This shows that $\mathcal{K}_{\mathds{R}^{2}}(f_{c})=\emptyset .$
\end{example}

We conclude this paper with a result that is a direct consequence of the Theorem \ref{thm33}.

\begin{corollary} Let $\mathds{R}^{2}$ be a real nonassociative algebra whose multiplication table is given either by Table II, where $b_{22}\neq 0$ and $B=0,$ by Table III, where $B\neq 0,$ by Table IV, where $|A|<2$ or $B\neq 0,$ or by Table VI, where $AB\neq 1.$ Then:
\begin{enumerate}
\item[(a)] for every element $c$ of $\mathds{R}^{2}$ the filled Julia sets $\mathcal{K}_{\mathds{R}^{2}}(f_{c})$ associated with $f_{c}$ are compact sets of $\mathds{R}^{2};$
\item[(b)] the Mandelbrot sets $\mathcal{M}_{\mathds{R}^{2}}$ are compact sets of $\mathds{R}^{2}.$
\end{enumerate}
\end{corollary}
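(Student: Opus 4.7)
The plan is to recognize this corollary as a straightforward assembly of three ingredients already developed in the paper: the square-inequality checks given case-by-case in Theorems \ref{thm51}--\ref{thm54}, the topological conclusion of Theorem \ref{thm33}, and the Heine--Borel property of the finite-dimensional normed space $\mathds{R}^{2}$ equipped with the $g$-induced norm (\ref{gnorm}).

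First I would dispatch each of the four listed cases to the corresponding earlier result in order to secure the hypothesis needed for Theorem \ref{thm33}. Specifically, for an algebra with Table II satisfying $b_{22}\neq 0$ and $B=0$, item (ii) of Theorem \ref{thm53} yields a positive constant $\eta$ with $\|u^{2}\|\geq \eta \|u\|^{2}$ for all $u\in \mathds{R}^{2}$; for Table III with $B\neq 0$, the same conclusion is item (ii) of Theorem \ref{thm51}; for Table IV with $|A|<2$ or $B\neq 0$, it is item (ii) of Theorem \ref{thm52}; and for Table VI with $AB\neq 1$, it is item (ii) of Theorem \ref{thm54}. Thus, in every one of the four scenarios in the statement, the norm of $\mathds{R}^{2}$ satisfies the square inequality.

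Next I would feed this information into Theorem \ref{thm33}. Applying part (i) of that theorem, for every $c\in \mathds{R}^{2}$ the filled Julia set $\mathcal{K}_{\mathds{R}^{2}}(f_{c})$ is both bounded and closed in $(\mathds{R}^{2},\|\cdot\|)$; applying part (ii), the Mandelbrot set $\mathcal{M}_{\mathds{R}^{2}}$ is likewise bounded and closed. Finally, since the norm (\ref{gnorm}) on $\mathds{R}^{2}$ is the standard Euclidean norm with respect to the chosen basis, the Heine--Borel theorem implies that a subset of $\mathds{R}^{2}$ is compact if and only if it is bounded and closed. Therefore $\mathcal{K}_{\mathds{R}^{2}}(f_{c})$ and $\mathcal{M}_{\mathds{R}^{2}}$ are compact, establishing (a) and (b).

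There is essentially no substantive obstacle here; the only points requiring any care are bookkeeping ones. One must verify that the precise hypothesis of each of Theorems \ref{thm51}--\ref{thm54} matches the four scenarios listed in the corollary (which it does, by construction of the statement), and one must remember that the ``bounded and closed'' conclusion of Theorem \ref{thm33} upgrades to compactness only because $\dim \mathds{R}^{2}=2$ and the norm is Euclidean. Both observations are immediate, so the corollary follows with no additional computation.
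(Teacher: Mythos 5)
Your proposal is correct and follows exactly the route the paper intends: the corollary is stated there as a direct consequence of Theorem \ref{thm33}, with the square inequality supplied case by case by Theorems \ref{thm51}--\ref{thm54} (Table III, IV, II with $b_{22}\neq 0$ and $B=0$, and VI respectively) and compactness obtained from boundedness and closedness via Heine--Borel in the finite-dimensional normed space $\mathds{R}^{2}$. Your case dispatch matches the paper's hypotheses precisely, so nothing further is needed.
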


%

\end{document}